\newcommand{\QQ}{\mathbb Q}
\newcommand{\ZZ}{\mathbb Z}
\newcommand{\CC}{\mathbb C}
\newcommand{\FF}{\mathbb F}
\newcommand{\Qp}{\QQ_p}
\newcommand{\Zp}{\ZZ_p}
\newcommand{\calH}{\mathcal{H}}
\newcommand{\calF}{\mathcal{F}}
\newcommand{\calL}{\mathcal{L}}
\newcommand{\calO}{\mathcal{O}}
\newcommand{\calG}{\mathcal{G}}
\newcommand{\ff}{\mathfrak{f}}
\newcommand{\pp}{\mathfrak{p}}
\newcommand{\Hom}{\mathrm{Hom_{cts}}}
\DeclareMathOperator{\Res}{Res}
\DeclareMathOperator{\Sel}{Sel}
\DeclareMathOperator{\Gal}{Gal}
\DeclareMathOperator{\ord}{ord}
\DeclareMathOperator{\Tr}{Tr}
\DeclareMathOperator{\GL}{GL}
\DeclareMathOperator{\Irr}{Irr}
\DeclareMathOperator{\HOM}{Maps}
\DeclareMathOperator{\Det}{Det}
\newcommand{\HH}{\mathcal{H}}
\newcommand{\RR}{\mathbb R}
\newcommand{\MGH}{\mathfrak{M}_{\calH}(\calG)}
\DeclareMathOperator{\Ind}{Ind}
\newtheorem{theorem}{Theorem}[section]
\newtheorem{proposition}[theorem]{Proposition}
\newtheorem{lemma}[theorem]{Lemma}
\newtheorem{remark}[theorem]{Remark}
\newtheorem{definition}[theorem]{Definition}
\newtheorem{conjecture}[theorem]{Conjecture}
\newtheorem{condition}[theorem]{Condition}
\newtheorem*{assumption}{Technical assumption}
\begin{document}

\title{Non-commutative $p$-adic $L$-functions for supersingular primes}

\author{Antonio Lei}
\address{Department of Mathematics and Statistics\\
Burnside Hall\\
McGill University\\
Montreal QC\\
Canada H3A 0B9}
\email{antonio.lei@mcgill.ca}
\thanks{The author is supported by a CRM-ISM fellowship.}

\begin{abstract}
Let $E/\QQ$ be an elliptic curve with good supersingular reduction at $p$ with $a_p(E)=0$. We give a conjecture on the existence of analytic plus and minus $p$-adic $L$-functions of $E$ over the $\Zp$-cyclotomic extension of a finite Galois extension of $\QQ$ where $p$ is unramified. Under some technical conditions, we adopt the method of Bouganis and Venjakob for $p$-ordinary CM elliptic curves to construct such functions for a particular non-abelian extension. 
\end{abstract}

\keywords{non-commutative Iwasawa theory; elliptic curves; supersingular primes; $p$-adic $L$-functions}
\subjclass[2000]{11R23 (primary), 11F67 (secondary)}

\maketitle

 
\section{Introduction}
Let $E/\mathbb{Q}$ be an elliptic curve with good ordinary reduction at an odd prime $p$. Coates {\it et al.} \cite{CKFVS,coateshowson} have recently developed the framework of the Iwasawa theory of $E$ over a $p$-adic Lie extension $\calF$ of $\QQ$ that contains the $\Zp$-cyclotomic extension $\QQ_\infty$. Let $\calG=\Gal(\calF/\QQ)$ and $\Gamma=\Gal(\QQ_\infty/\QQ)$. It is predicted that there exists a $p$-adic $L$-function $\calL_{\calG,E}\in K_1(\Lambda_{\calO}(\calG)_{S^*})$, where $\Lambda_{\calO}(\calG)_{S^*}$ is an appropriate localisation of an Iwasawa algebra of $\calG$ and $\calL_{\calG,E}$ interpolates the complex $L$-values of $E$ in the following sense. For all Artin representations $\rho$ of $\calG$, $\calL_{\calG,E}$ is expected to satisfy
\begin{equation}\label{eq:ord}
\calL_{\calG,E}(\rho)=\frac{e_p(\rho)}{u^{\ff_p(\rho)}}\times\frac{P_p(\rho,u^{-1})}{P_p(\rho^\vee,w^{-1})}\times\frac{L_R(E,\rho^\vee,1)}{\Omega_+^{d^+(\rho)}\Omega_-^{d^-(\rho)}}.
\end{equation}
Here, $e_p$ denotes the local epsilon factor at $p$, $u$ and $w$ are respectively the $p$-adic unit and non-unit roots of the quadratic $X^2-a_p(E)X+p$, $P_p(\star,X)$ is the polynomial describing the Euler factor of $\star$ at $p$ and $R$ is the set consisting of the prime $p$ and all primes at which $E$ has multiplicative reduction. We shall review some of the notation in \S\ref{sec:no}. The main conjecture predicts that such a $p$-adic $L$-function, should it exist, generates the characteristic ideal of the dual Selmer group $X(E/\calF)$, which is conjectured to lie in the $\MGH$-category (see \S\ref{sec:ore} below).

The existence of $\calL_{\calG,E}$ would generalise the work of Mazur and Swinnerton-Dyer \cite{MSD74}, where they constructed an element $\calL_{\Gamma,E}\in\Lambda(\Gamma)$ that interpolates the $L$-values of $E$ twisted by finite characters of $\Gamma$. More precisely, if $\chi$ is a character on $\Gamma$ with conductor $p^n>1$, then
\[
\calL_{\Gamma,E}(\chi)=\frac{\tau(\chi)}{u^n}\times\frac{L(E,\chi^{-1},1)}{\Omega_+},
\]
where $\tau(\chi)$ denotes the Gauss sum of $\chi$.

When $E$ has good supersingular reduction, no such element exists. Instead, Amice and V{\'e}lu \cite{amicevelu75} constructed two admissible $p$-adic $L$-functions $\calL_{E,\Gamma}^u$ and $\calL_{E,\Gamma}^w$, one for each of the two roots, $u$ and $w$, to $X^2-a_p(E)X+p$. Even though they have similar interpolating properties as their counterpart in the ordinary case, they do not lie in $\Lambda(\Gamma)$. Pollack \cite{pollack03} resolved this  by decomposing $\calL_{E,\Gamma}^u$ and $\calL_{E,\Gamma}^w$ into linear combinations of two elements $\calL^\pm_{E,\Gamma}\in\Lambda(\Gamma)$ when $a_p(E)=0$ and it has been generalised to the case where $a_p(E)\ne0$ by Sprung \cite{sprung09}. We shall concentrate on the former case in this paper. Pollack's $p$-adic $L$-functions exhibit the following interpolating properties. Let $\chi$ be a character on $\Gamma$ of conductor $p^n>1$. If $n$ is even,
\begin{equation}\label{eq:+inter}
\calL_{\Gamma,E}^+(\chi)=\frac{\tau(\chi)}{\omega^+(\chi)}\times\frac{L(E,\chi^{-1},1)}{\Omega_+},
\end{equation}
whereas if $n$ is odd,
\begin{equation}\label{eq:-inter}
\calL_{\Gamma,E}^-(\chi)=\frac{\tau(\chi)}{\omega^-(\chi)}\times\frac{L(E,\chi^{-1},1)}{\Omega_+}.
\end{equation}
Here, $\omega^+(\chi)$ and $\omega^-(\chi)$ are some non-zero factors that come from the plus and minus logarithms $\log^\pm$ defined in \cite{pollack03}. It is shown in {\it op. cit.} that $\calL_{\Gamma,E}^\pm$ are uniquely determined by \eqref{eq:+inter} and \eqref{eq:-inter} respectively. We shall review some of the details of Pollack's work in \S\ref{sec:pol}.

The main goal of this paper is to formulate a conjecture on the existence of two elements $\calL_{\calG,E}^\pm\in K_1(\Lambda_{\calO}(\calG)_{S^*})$ with interpolating properties similar to \eqref{eq:+inter} and \eqref{eq:-inter} for $\calG=\Gal(\calF/\QQ)$, where $\calF$ is the $\Zp$-cyclotomic extension of a finite Galois extension of $\QQ$ in which $p$ is unramified. Let $\rho$ be an irreducible Artin representation of $\calG$ with $\ff_p(\rho)=n$. We define in \S\ref{sec:con} the factors $\omega^+(\rho)$ and $\omega^-(\rho)$, depending on the parity of $n$. We then go on to predict that there exist $\calL_{\calG,E}^\pm\in K_1(\Lambda_{\calO}(\calG)_{S^*})$ that satisfy
\begin{equation}\label{eq:++inter}
\calL_{\calG,E}^+(\rho)=\frac{e_p(\rho)}{\omega^+(\rho)}\times\frac{L_R(E,\rho^\vee,1)}{\Omega_+^{d^+(\rho)}\Omega_-^{d^-(\rho)}}
\end{equation}
if $n$ is even, whereas for odd $n$,
\begin{equation}\label{eq:--inter}
\calL_{\calG,E}^-(\rho)=\frac{e_p(\rho)}{\omega^-(\rho)}\times\frac{L_R(E,\rho^\vee,1)}{\Omega_+^{d^+(\rho)}\Omega_-^{d^-(\rho)}}.
\end{equation}
Here $R$ is the set of primes as defined in the ordinary case above.
Note that the Euler factors at $p$ are trivial for the representations which we consider, which explains why they are not present in our conjectural formulae. Roughly speaking, $\calL_{\calG,E}^+$ and $\calL_{\calG,E}^-$ are  each  interpolating the $L$-values of $E$ twisted by  ``half" of all irreducible Artin representations of $\calG$. Despite these seemingly weaker properties, we show that, as in the ordinary case, if $\calL_{\calG,E}^\pm$ exist, they are uniquely determined by \eqref{eq:++inter} and \eqref{eq:--inter} respectively as elements of $K_1(\Lambda_{\calO}(\calG)_{S^*})$ modulo the kernel of a determinant map.

Kobayashi \cite{kobayashi03} defined the plus and minus Selmer groups $\Sel^\pm_p(E/\QQ_\infty)$ for $E$ over $\QQ_\infty$ and formulate a main conjecture predicting that the characteristic ideals of their Pontryagin duals $X^\pm(E/\QQ_\infty)$ should be generated by $\calL_{\Gamma,E}^\pm$ (this has been proved by Pollack and Rubin \cite{pollackrubin04} when $E$ has complex multiplication). In \cite{leizerbes11}, we have generalised Kobayashi's construction to arbitrary $p$-adic Lie extensions. Therefore, analogous to the ordinary case, the existence of $\calL_{\calG,E}^\pm$ would allow us to formulate a main conjecture for these plus and minus Selmer groups, which relates the characteristic ideals of $X^\pm(E/\calF)$ to $\calL_{\calG,E}^\pm$. See \S\ref{sec:main} for details.

In \cite{bv10}, Bouganis and Venjakob proved the existence of the $p$-adic $L$-function $\calL_{\calG,E}\in K_1(\Lambda_{\calO}(\calG)_{S^*})$ that satisfies \eqref{eq:ord}, where $E$ is an elliptic curve with complex multiplication that has good ordinary reduction at $p$ and $\calG=\Gal(\QQ(E[p^\infty])/\QQ)$ assuming that the Selmer group satisfies the $\MGH$-conjecture. Their construction makes use of the $2$-variable $p$-adic $L$-function of Yager \cite{yager82}, which does not exists in the supersingular case. We have nonetheless managed to adopt their method, together with some of the ideas from \cite{kimparkzhang}, to construct two elements $\calL_{\calG,E}^\pm$ that satisfy \eqref{eq:++inter} and \eqref{eq:--inter} respectively under some technical assumptions when $E$ is an elliptic curve with complex multiplication by $K$ which has good supersingular reduction at $p$ and $\calG$ is the Galois group of the extension of $\QQ$ by $\QQ_\infty\cdot F$, where $F$ is an abelian extension of $K$ in which $p$ is unramified. Note that $\calG$ could be abelian, but there do exist examples for which this is not the case, e.g. $K=\QQ(\sqrt{-3})$ and $F=K(\sqrt[3]{2})$. The details of our construction are given in \S\ref{sec:special}.

\section*{Acknowledgement}
The idea of this paper was originally developed when the author was a member of Monash University. He is extremely grateful for many helpful discussions with Daniel Delbourgo and Lloyd Peters on the subject while he was there. He is also indebted to David Loeffler for pointing out a few mistakes in an earlier version of this paper and to Henri Darmon and Alex Bartel for answering many of his questions. Finally, the author would like to thank the anonymous referee for suggestions that help improving the paper.

\section{Notation and setup}\label{sec:no}

\subsection{$p$-adic Lie extension} \label{sec:paLe}
  Throughout this paper, $p$ is an odd prime. If $K$ is a field of characteristic $0$, either local or global, $G_K$ denotes its absolute Galois group, $\kappa$ the $p$-cyclotomic character on $G_K$ and $\calO_K$ the ring of integers of $K$. We write $c$ for the complex conjugation in $G_\QQ$.

We let $K_\infty$ denote the $\Zp$-cyclotomic extension of $K$. The Galois group $\Gal(K_\infty/K)$ is written as $\Gamma_K$. When $K=\QQ$, we simply write $\Gamma$ for $\Gamma_\QQ$. We fix a topological generator $\gamma$ of $\Gamma$.

 From now on, we fix a finite Galois extension $F$ of $\QQ$ with Galois group $\calH$. We assume that $p$ is unramified in $F$. Let $\calF=F_\infty$. Our assumption on $F$ implies that $F\cap\QQ_\infty=\QQ$. Hence, $F_\infty/\QQ$ is Galois. We write $\calG$ for its Galois group. Then
 \begin{equation}\label{eq:direct}
 \calG\cong\calH\times\Gamma,
 \end{equation}
 which allows us to view both $\Gamma=\Gamma_\QQ$ and $\calH$ as subgroups of $\calG$ (e.g. $\calG$ could be the Galois group studied in \cite{hara}, where $\calH$ is the group of $4\times4$ upper-triangular unipotent matrices over $\FF_p$). We fix a family of subgroups of $\calH$, written as $\{U_i\}_{i\in I}$, such that the following is true.
 \begin{condition}\label{hyp:family}
 For all irreducible representations $\rho$ of $\calH$, the trace of $\rho$ is equal to a $\ZZ$-linear combinaton of representations of the form $\Ind_{U_i}^\calH\chi_{\rho,i}$ where $i\in I$ and $\chi_{\rho,i}$ is an one-dimensional character on $U_i$.
 \end{condition}
\begin{remark}Note that such a family exists by Brauer's theorem on induced characters.\end{remark}

By \eqref{eq:direct}, for all irreducible representations $\rho$ of $\calG$, there exists an one-dimensional character $\rho_0$ of $\Gamma$ such that the trace of $\rho$ is equal to a $\ZZ$-linear combinaton of representations of the form 
\[
\Ind_{U_i\times\Gamma}^\calG(\chi_{\rho,i}\otimes\rho_0)\cong\left(\Ind_{U_i}^\calG\chi_{\rho,i}\right)\otimes\rho_0
\]
 where $i\in I$ and $\chi_{\rho,i}$ is an one-dimensional character on $U_i$.
  
  We write $V_i=[U_i,U_i]$ for all $i\in I$.

\subsection{Iwasawa algebras and power series}
 Given a finite extension $K$ of $\Qp$ and a $p$-adic Lie group $G$, $\Lambda_{\calO_K}(G)$ denotes the Iwasawa algebra of $G$ over $\calO_K$, i.e.
 \[
  \varprojlim_{N}\calO_K[G/N], 
 \] 
 where the inverse limit runs over the open normal subgroups of $G$. When $K=\Qp$ (so $\calO_K=\Zp$), we suppress $\Zp$ from the notation and write $\Lambda(G)$ for $\Lambda_{\Zp}(G)$. We denote $\Lambda_{\calO_K}(G)\otimes_{\calO_K} K$ by $\Lambda_K(G)$.

Let $r\in\RR_{\ge0}$. We define
\[
\HH_r=\left\{\sum_{n\geq0}c_{n} X^n\in\CC_p[[X]]:\sup_{n}\frac{|c_{n}|_p}{n^r}<\infty\ \right\},
\]
where $|\cdot|_p$ is the $p$-adic norm on $\CC_p$ such that $|p|_p=p^{-1}$. We write $\HH_r(\Gamma)=\{f(\gamma-1):f\in\HH_r\}$. In other words, the elements of $\HH_r$ (respectively $\HH_r(\Gamma)$) are the power series in $X$ (respectively $\gamma-1$) over $\CC_p$ with growth rate $O(\log_p^r)$. 

Given a subfield $K$ of $\CC_p$, we write $\HH_{r,K}=\HH_r\cap K[[X]]$ and similarly for $\HH_{r,K}(\Gamma)$. In particular, $\HH_{0,K}(\Gamma)=\Lambda_{K}(\Gamma)$.

If $h=\sum_{n\ge 0} c_{n}(\gamma-1)^n\in\HH_r(\Gamma)$ and $\lambda\in\Hom(\Gamma,\CC_p^\times)$, we write
\[
h(\lambda)=\sum_{n\ge 0} c_{n}(\lambda(\gamma)-1)^n\in\CC_p.
\]
More generally, if $\rho$ is an Artin representation on $\Gamma$, then $\rho$ may be decomposed into a finite sum of characters on $\Gamma$, say 
\[
\rho\cong\bigoplus_{i=1}^r\chi_i.
\]
 We then write
\[
h(\rho)=\prod_{i=1}^rh(\chi_i).
\]


\subsection{Artin representations}

Let $V$ be a finite dimensional vector space over $\CC$. If $\rho:G_{\QQ}\rightarrow \GL_{\CC}(V)$ is an Artin representation, we write $d(\rho)$ (respectively $d^\pm(\rho)$) for the dimension of $V$ (respectively the subspace of $V$ on which the complex conjugation acts as $\pm1$). The contragredient representation of $\rho$ is denoted by $\rho^\vee$.

We now review the definition of the local epsilon factor $e_p(\rho)=e_p(\rho|_{G_{\Qp}})$ at $p$ (see \cite[\S3]{tatebackground} and \cite[\S6.10]{dokchitsers07} for details). It depends on a choice of Haar measure and an additive character on $\Qp$. In this article, we take the canonical measure $\mu$ on $\Qp$ with $\mu(\Zp)=1$ and the additive character that sends $ap^{-n}$ to $e^{2a\pi i/p^n}$ for $a\in\Zp$. It is multiplicative in the sense that $e_p(\rho_1\oplus\rho_2)=e_p(\rho_1)e_p(\rho_2)$, so it is enough to define $e_p(\rho)$ for irreducible $\rho$.

If $\rho$ is one-dimensional, we may factor $\rho|_{G_{\Qp}}$ into $\rho_0\rho'$, where $\rho_0$ is an unramified character and $\rho'$ is a Dirichlet character of conductor $p^n$. In this case, the epsilon factor is defined to be
\[
e_p(\rho)=\rho_0(p)^n\times \tau(\rho'),
\]
where $\tau(\rho')$ denotes the Gauss sum of $\rho'$. If $\rho=\Ind_{F}^\QQ(\chi)$ for some character $\chi$ on $G_F$, where $F$ is a number field, we may define $e_{\mathfrak{p}}(\chi)$ for each place $\mathfrak{p}$ of $F$ above $p$ in a similar way. The epsilon factor of $\rho$ at $p$ is then defined to be
\[
e_p(\rho)=e_p(\Ind_{F}^\QQ(\mathbf{1}))\prod_{\mathfrak{p}|p}\frac{e_{\mathfrak{p}}(\chi)}{e_{\mathfrak{p}}(\mathbf{1})}.
\]


\subsection{A canonical Ore set}\label{sec:ore}

We recall the definition of an Ore set $S$ from \cite{CKFVS}. Let $\calO$ be the ring of integer of a finite extension of $\Qp$. If $\calG$ and $\calH$ are as in \S\ref{sec:paLe}, we define
\[
S(\calO)=\{f\in\Lambda_\calO(\calG):\text{$\Lambda_{\calO}(\calG)/\Lambda_{\calO}(\calG)f$ is a finitely generated $\Lambda_{\calO}(\calH)$-module}\}
\]
and $S(\calO)^*=\cup_{n\ge 0}p^nS(\calO)$. We write $\Lambda_{\calO}(\calG)_{S^*}$ for the localisation of $\Lambda_{\calO}(\calG)$ at $S(\calO)^*$.

Let $U_i$ be the subgroups as fixed in Section~\ref{sec:paLe}. For each $i\in I$, we have a natural map
\[
\theta_i:K_1(\Lambda_\calO(\calG)_{S^*})\rightarrow K_1(\Lambda_\calO(U_i\times\Gamma)_{S^*})\rightarrow K_1(\Lambda_\calO(U_i/V_i\times\Gamma)_{S^*})=\Lambda_{\calO}(U_i/V_i\times\Gamma)_{S^*}^\times.
\]

Let $\Irr_p(\calG)$ be the additive group generated by isomorphic classes of $p$-adic representations of $\calG$. There is a determinant map
\begin{eqnarray*}
\Det:K_1(\Lambda_\calO(\calG)_{S^*})&\rightarrow&\HOM\left(\Irr_p(\calG),\overline{\Qp}\cup\{\infty\}\right)\\
x&\mapsto&(\rho\mapsto x(\rho)).
\end{eqnarray*}

We write $\MGH$ for the category of all finitely generated $\Lambda(\calG)$-modules which are $S^*(\Zp)$-torsion. There is a connecting map
\[
\partial_{\calG}:K_1(\Lambda(\calG)_{S^*})\rightarrow K_0(\MGH).
\]
It is surjective when $\calG$ has no $p$-torsion. Given an element $M$ in $\MGH$, a characteristic element for $M$ is any $\xi_M\in K_1(\Lambda(\calG)_{S^*})$ such that $\partial_{\calG}(\xi_M)=[M]$.


\section{Conjectures}

\subsection{Pollack's plus and minus $p$-adic $L$-functions}\label{sec:pol}

We first recall Pollack's construction of plus and minus $p$-adic $L$-functions for the $\Zp$-cyclotomic extension of $\QQ$ in \cite{pollack03}. The plus and minus logarithms are defined as follows.

\begin{definition}
Let $r\ge0$ be an integer, define
\begin{eqnarray*}
\log^+_r&=&\prod_{s=0}^{r}\frac{1}{p}\prod_{n=1}^\infty\frac{\Phi_{2n}(\kappa(\gamma)^{-s}\gamma)}{p},\\
\log^-_r&=&\prod_{s=0}^{r}\frac{1}{p}\prod_{n=1}^\infty\frac{\Phi_{2n-1}(\kappa(\gamma)^{-s}\gamma)}{p},
\end{eqnarray*}
where $\Phi_m$ denotes the $p^m$-cyclotomic polynomial.
\end{definition}

\begin{lemma}\label{lem:pollackzero}
Let $\lambda$ be a character on $\Gamma$. Then $\log^+_r(\lambda)=0$ if and only if $\lambda=\kappa^s\theta$, where $s\in[0,r]$ and $\theta$ is a Dirichlet character whose conductor is an odd power of $p$. Similarly, $\log^-_r(\lambda)=0$ if and only if $\lambda=\kappa^s\theta$, where $s\in[0,r]$ and $\theta$ is a Dirichlet character whose conductor is an even power of $p$.
\end{lemma}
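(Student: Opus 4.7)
The plan is to reduce the vanishing of $\log_r^+(\lambda)$ to the vanishing of a single factor in the defining infinite product, and then to rewrite that condition in terms of Dirichlet conductors. The argument for $\log_r^-$ will be identical with $2n$ replaced by $2n-1$ throughout.

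First, the scalar $\frac{1}{p}$ is non-zero and, by Pollack's convergence analysis in \cite{pollack03}, the formal infinite product converges to a well-defined element of $\HH_r(\Gamma)$. Evaluating at a character $\lambda$ therefore yields a convergent product of elements of $\CC_p$, which vanishes if and only if at least one factor does. Hence $\log_r^+(\lambda)=0$ iff there exist $s\in\{0,1,\dots,r\}$ and $n\ge 1$ such that $\Phi_{2n}\bigl(\kappa(\gamma)^{-s}\lambda(\gamma)\bigr)=0$. Since $\Phi_{2n}$ here denotes the $p^{2n}$-th cyclotomic polynomial, this is equivalent to $\kappa(\gamma)^{-s}\lambda(\gamma)$ being a primitive $p^{2n}$-th root of unity.

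Writing $\theta:=\kappa^{-s}\lambda$, which is itself a character of $\Gamma$, the condition becomes that $\theta(\gamma)$ is a primitive $p^{2n}$-th root of unity, i.e.\ $\theta$ has order exactly $p^{2n}$. Under the standard identification of continuous finite-order characters of $\Gamma\cong 1+p\ZZ_p$ with Dirichlet characters of $p$-power conductor (obtained by extending across $\mu_{p-1}\subset\ZZ_p^\times$ trivially), a character of order $p^{2n}$ corresponds to a Dirichlet character of conductor $p^{2n+1}$, which is an odd power of $p$. Each step of this chain is reversible, yielding the converse implication.

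I do not foresee a substantial obstacle: the proof is essentially a careful unpacking of the zeros of the cyclotomic polynomials $\Phi_{2n}$ together with the dictionary between orders of characters of $\Gamma$ and Dirichlet conductors. The only point requiring mild attention is that the integer $s$ producing a zero is automatically in the range $[0,r]$, which is immediate from the index range of the outer product in the definition of $\log_r^\pm$.
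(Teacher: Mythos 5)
Your argument is correct, and it is essentially the standard one: the paper itself simply cites \cite[Lemma~4.1]{pollack03} for this statement, and the proof there proceeds exactly as you do, by noting that the evaluated infinite product (whose factors tend to $1$, so the tail has absolute value $1$) vanishes iff a single factor $\Phi_{2n}(\kappa(\gamma)^{-s}\lambda(\gamma))/p$ does, and then translating ``$\kappa^{-s}\lambda$ has order $p^{2n}$'' into ``conductor $p^{2n+1}$, an odd power of $p$.'' The only points worth being explicit about are the non-archimedean fact that the tail of the convergent product is a unit, and the order-versus-conductor shift by one, both of which you handle correctly.
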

\begin{proof}
\cite[Lemma~4.1]{pollack03}.
\end{proof}

\begin{remark}
Both $\log^+_r$ and $\log^-_r$ are elements of $\calH_{r/2,\Qp}(\Gamma)$ as given by \cite[Lemma~4.5]{pollack03}.
\end{remark}

 Let $f=\sum a_nq^n$ be a normalised eigen-newform of weight $k\ge2$, level $N$ and Nebentypus character $\epsilon$. Throughout, we assume $p\nmid N$. We write $F_p(f)$ for the completion of the coefficient field $\QQ(a_n:n\ge1)$ at a prime above $p$. Let $\alpha$ be a root of $X^2-a_pX+\epsilon(p)p^{k-1}=0$ with $h:=\ord_p(\alpha)<k-1$. Amice and V\'{e}lu \cite{amicevelu75} constructed $L_{f,\alpha}\in\HH_{h,F_p(f)}(\Gamma)$ (written as $\calL^\alpha_{\Gamma,E}$ in the introduction) with the following interpolating properties. For all integers $j\in[0,k-2]$ and Dirichlet characters $\theta$ of conductor $p^n$, 
\begin{equation}\label{eq:AV}
L_{p,\alpha}(\theta\kappa^j)=e_\alpha(\theta,j)\times\frac{p^{n(j+1)}j!}{\tau(\theta^{-1})(-2\pi i)^j}\times\frac{L(f,\theta^{-1},j+1)}{\Omega_f^\delta},
\end{equation}
where $\delta=(-1)^j$, $\tau(\theta^{-1})$ denotes the Gauss sum of $\theta^{-1}$,
\[
e_\alpha(\theta,j)=\frac{1}{\alpha^n}\left(1-\frac{\theta^{-1}(p)\epsilon(p)p^{k-2-j}}{\alpha}\right)\left(1-\frac{\theta(p)p^j}{\alpha}\right)
\]
and $\Omega_f^\pm$ are some choice of periods.
Moreover, $L_{f,\alpha}$ is uniquely determined by its values at $\theta\kappa^j$ as given by \eqref{eq:AV}.

\begin{remark}\label{rk:even}
We are only considering the $\Zp$-cyclotomic extension here, rather than the whole extension by $p$-power roots of unity as studied in \cite{amicevelu75}. In particular, $\theta(-1)$ is always $1$.
\end{remark}

\begin{theorem}\label{thm:pollack}
Let $f$ and $\alpha$ be as above with $a_p=0$. There exist $L_f^\pm\in\Lambda_{F_p(f)}(\Gamma)$ (written as $\calL_{\Gamma,E}^\pm$ in the introduction) such that
\[
L_{f,\alpha}=\log_{k-1}^+L_f^++\alpha\log_{k-1}^-L_f^-.
\]
\end{theorem}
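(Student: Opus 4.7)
The plan is to exploit the symmetry afforded by $a_p=0$: the two roots of $X^2-a_pX+\epsilon(p)p^{k-1}$ are $\pm\alpha$, both of slope $(k-1)/2<k-1$, so Amice--V\'elu produces a companion admissible $p$-adic $L$-function $L_{f,-\alpha}\in\HH_{(k-1)/2,F_p(f)}(\Gamma)$ satisfying the analogue of \eqref{eq:AV} with $\alpha$ replaced by $-\alpha$. The desired decomposition then becomes a $2\times 2$ linear system: if we set
\[
L_f^+:=\frac{L_{f,\alpha}+L_{f,-\alpha}}{2\log^+_{k-1}},\qquad L_f^-:=\frac{L_{f,\alpha}-L_{f,-\alpha}}{2\alpha\log^-_{k-1}},
\]
then the identity $L_{f,\alpha}=\log^+_{k-1}L_f^++\alpha\log^-_{k-1}L_f^-$ is tautological. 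The real content of the theorem is that both quotients actually lie in the bounded Iwasawa algebra $\Lambda_{F_p(f)}(\Gamma)$ rather than merely in $\HH_{(k-1)/2,F_p(f)}(\Gamma)$.

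The first task is divisibility: I need $\log^\pm_{k-1}$ to divide the corresponding numerator in $\CC_p[[\gamma-1]]$. By Lemma~\ref{lem:pollackzero}, the zeros of $\log^+_{k-1}$ are the characters $\theta\kappa^s$ with $s\in[0,k-1]$ and $\theta$ a Dirichlet character of conductor $p^n$, $n$ odd. At any such character the ramification hypothesis $n\ge 1$ forces $\theta(p)=\theta^{-1}(p)=0$, so the Euler factor $e_\alpha(\theta,s)$ in \eqref{eq:AV} collapses to $\alpha^{-n}$, and symmetrically the analogous factor for $-\alpha$ is $(-\alpha)^{-n}=-\alpha^{-n}$ since $n$ is odd. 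All remaining interpolation factors in \eqref{eq:AV} are the same for both roots, so
\[
(L_{f,\alpha}+L_{f,-\alpha})(\theta\kappa^s)=(\alpha^{-n}+(-\alpha)^{-n})\cdot(\text{common factor})=0.
\]
Density of such characters in $\Hom(\Gamma,\CC_p^\times)$ and the fact that each cyclotomic factor $\Phi_{2n}(\kappa(\gamma)^{-s}\gamma)$ defining $\log^+_{k-1}$ is separable upgrade this pointwise vanishing (with the correct multiplicity) to divisibility in $\CC_p[[\gamma-1]]$. The argument for $L_{f,\alpha}-L_{f,-\alpha}$ at the zeros of $\log^-_{k-1}$ is symmetric, using $\alpha^{-n}=(-\alpha)^{-n}$ for $n$ even.

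The main obstacle is the \emph{boundedness} step: promoting each quotient from growth rate $(k-1)/2$ down to growth rate $0$. In general, dividing one element of $\HH_r$ by another of the same growth class need not produce a bounded quotient, so a separate argument is required. The key input is that $\log^\pm_{k-1}\in\HH_{(k-1)/2,\Qp}(\Gamma)$ (as recorded in the remark after Lemma~\ref{lem:pollackzero}) saturates its growth class, in the sense that the counting function of its zeros in the disc $\{|\gamma-1|_p\le p^{-m}\}$ grows like $\tfrac{k-1}{2}\cdot p^m$. A Newton-polygon / Weierstrass-type factorisation in $\HH_r(\Gamma)$ then shows that any $F\in\HH_{(k-1)/2,F_p(f)}(\Gamma)$ vanishing on the full zero set of $\log^\pm_{k-1}$ with the correct multiplicities has $F/\log^\pm_{k-1}\in\HH_{0,F_p(f)}(\Gamma)=\Lambda_{F_p(f)}(\Gamma)$. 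Applying this to our two numerators yields $L_f^\pm\in\Lambda_{F_p(f)}(\Gamma)$, and uniqueness of the decomposition follows from the uniqueness of $L_{f,\pm\alpha}$ as in Amice--V\'elu together with the density of twists by $\theta\kappa^j$.
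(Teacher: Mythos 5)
Your argument is correct and is essentially Pollack's original proof: the paper does not reprove this statement but simply cites \cite{pollack03}, whose argument runs exactly as you describe (form the symmetric and antisymmetric combinations of $L_{f,\alpha}$ and $L_{f,-\alpha}$, use the interpolation formula \eqref{eq:AV} together with $(-\alpha)^{-n}=\pm\alpha^{-n}$ to get vanishing at the zeros of $\log^\pm_{k-1}$, and then invoke a zero-counting/growth lemma to show the quotients are bounded). The one point to keep in mind is that the boundedness step is exactly where the hypothesis $\ord_p(\alpha)=(k-1)/2$ is used, matching the growth class of $\log^\pm_{k-1}$, as you correctly note.
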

\begin{proof}
This is the main result of \cite{pollack03}.
\end{proof}

In particular, if $\alpha_1$ and $\alpha_2$ are the two roots to $X^2+\epsilon(p)p^{k-1}=0$, we have $\alpha_1=-\alpha_2$ and
\begin{eqnarray}
L_{f}^+&=&\frac{L_{p,\alpha_1}+L_{p,\alpha_2}}{2\log_{k-1}^+},\label{eq:pol1}\\
L_{f}^-&=&\frac{L_{p,\alpha_2}-L_{p,\alpha_1}}{(\alpha_2-\alpha_1)\log_{k-1}^-}\label{eq:pol2}.
\end{eqnarray}
Therefore, we can readily combine \eqref{eq:AV} with \eqref{eq:pol1} and \eqref{eq:pol2} to obtain the interpolating formulae of $L_{f}^+$ and $L_f^-$ respectively as given below.

\begin{lemma}\label{lem:inter}
Let $j\in[0,k-2]$ be an integer and $\theta$ a Dirichlet conductor $p^n>1$. Write $\delta$ as in \eqref{eq:AV}. If $n$ is even, then
\[
L_f^+(\theta\kappa^j)=\frac{1}{(-\epsilon(p)p^{k-1})^{n/2}\log_{k-1}^+(\theta\chi^j)}\times\frac{p^{n(j+1)}j!}{\tau(\theta^{-1})(-2\pi i)^j}\times\frac{L(f,\theta^{-1},j+1)}{\Omega_f^\delta},
\]
whereas $\theta\kappa^j(L_f^+)=0$ if $n=1$. For all odd $n$, we have
\[
L_f^-(\theta\kappa^j)=\frac{1}{(-\epsilon(p)p^{k-1})^{(n+1)/2}\log_{k-1}^-(\theta\chi^j)}\times\frac{p^{n(j+1)}j!}{\tau(\theta^{-1})(-2\pi i)^j}\times\frac{L(f,\theta^{-1},j+1)}{\Omega_f^\delta}.
\]
Moreover,
\begin{eqnarray*}
L_f^+(\kappa^j)&=&\frac{1-p^{-1}}{\log_{k-1}^+(\chi^j)}\times\frac{j!}{(-2\pi i)^j}\times\frac{L(f,j+1)}{\Omega_f^\delta},\\
L_f^-(\kappa^j)&=&\frac{p^{-j-1}+\epsilon(p)^{-1}p^{j-k+1}}{\log_{k-1}^-(\chi^j)}\times\frac{j!}{(-2\pi i)^j}\times\frac{L(f,j+1)}{\Omega_f^\delta}.
\end{eqnarray*}
\end{lemma}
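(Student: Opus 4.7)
The proof is essentially a direct computation, substituting the Amice--V\'elu interpolation formula \eqref{eq:AV} into the explicit expressions \eqref{eq:pol1} and \eqref{eq:pol2}, and exploiting the two consequences of $a_p=0$: namely $\alpha_1=-\alpha_2$ and $\alpha_i^2=-\epsilon(p)p^{k-1}$ (since the $\alpha_i$ are the roots of $X^2+\epsilon(p)p^{k-1}=0$). The plan is to dispose of the ramified case $n\ge 1$ first (the bulk of the statement), and then handle the trivial-conductor case $n=0$ separately, where the Euler factor $e_\alpha(\mathbf{1},j)$ contributes an extra piece.

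For $\theta$ of conductor $p^n$ with $n\ge 1$, we have $\theta(p)=\theta^{-1}(p)=0$, so the Euler factor collapses to $e_\alpha(\theta,j)=1/\alpha^n$, and \eqref{eq:AV} gives
\[
L_{p,\alpha_i}(\theta\kappa^j)=\frac{1}{\alpha_i^n}\cdot C,\qquad C=\frac{p^{n(j+1)}\,j!}{\tau(\theta^{-1})(-2\pi i)^j}\cdot\frac{L(f,\theta^{-1},j+1)}{\Omega_f^\delta}.
\]
The identity $\alpha_1=-\alpha_2$ gives $\alpha_1^{-n}+\alpha_2^{-n}=((-1)^n+1)\alpha_2^{-n}$ and $\alpha_2^{-n}-\alpha_1^{-n}=(1-(-1)^n)\alpha_2^{-n}$, which vanish in opposite parities. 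For $n$ even, \eqref{eq:pol1} then yields
$L_f^+(\theta\kappa^j)=C/(\alpha_2^n\log_{k-1}^+(\theta\kappa^j))$, and the relation $\alpha_2^n=(\alpha_2^2)^{n/2}=(-\epsilon(p)p^{k-1})^{n/2}$ gives the stated formula. For $n$ odd, \eqref{eq:pol2} together with $\alpha_2-\alpha_1=2\alpha_2$ yields $L_f^-(\theta\kappa^j)=C/(\alpha_2^{n+1}\log_{k-1}^-(\theta\kappa^j))$, and now $n+1$ is even so $\alpha_2^{n+1}=(-\epsilon(p)p^{k-1})^{(n+1)/2}$. The assertion $L_f^+(\theta\kappa^j)=0$ at $n=1$ is \emph{not} visible from \eqref{eq:pol1} directly (both numerator and denominator vanish there), so it must be imported from Pollack's paper, where it is established as part of the proof that $L_f^+\in\Lambda_{F_p(f)}(\Gamma)$ vanishes at all $\theta\kappa^j$ with $\theta$ of odd-power conductor.

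For the values at $\kappa^j$ (trivial $\theta$), one expands
\[
e_\alpha(\mathbf{1},j)=1-\frac{\epsilon(p)p^{k-2-j}}{\alpha}-\frac{p^j}{\alpha}+\frac{\epsilon(p)p^{k-2}}{\alpha^2}.
\]
Summing over $\alpha\in\{\alpha_1,\alpha_2\}$, the two middle terms cancel in pairs (by $\alpha_1+\alpha_2=0$), leaving $e_{\alpha_1}(\mathbf{1},j)+e_{\alpha_2}(\mathbf{1},j)=2+2\epsilon(p)p^{k-2}/\alpha_1^2=2(1-p^{-1})$ since $\alpha_1^2=-\epsilon(p)p^{k-1}$; dividing by $2\log_{k-1}^+(\kappa^j)$ produces the $L_f^+$ formula. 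For the difference, the $1$ and the $1/\alpha^2$ terms cancel while the middle terms add, giving $e_{\alpha_2}(\mathbf{1},j)-e_{\alpha_1}(\mathbf{1},j)=-2(\epsilon(p)p^{k-2-j}+p^j)/\alpha_2$; dividing by $(\alpha_2-\alpha_1)\log_{k-1}^-(\kappa^j)=2\alpha_2\log_{k-1}^-(\kappa^j)$ and using $\alpha_2^2=-\epsilon(p)p^{k-1}$ produces $p^{-j-1}+\epsilon(p)^{-1}p^{j-k+1}$ in the numerator. The only non-mechanical step is the vanishing claim at $n=1$, which we take from Pollack rather than derive here.
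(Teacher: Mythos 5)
Your computation is correct and is exactly what the paper intends: the paper offers no proof beyond the remark that one ``readily combines'' \eqref{eq:AV} with \eqref{eq:pol1} and \eqref{eq:pol2}, and your substitutions (collapse of $e_\alpha(\theta,j)$ to $\alpha^{-n}$ for $n\ge1$, the parity splitting via $\alpha_1=-\alpha_2$, the use of $\alpha_2^2=-\epsilon(p)p^{k-1}$, and the expansion of $e_\alpha(\mathbf{1},j)$ for the trivial character) all check out, including your correct observation that the $n=1$ vanishing is a genuine $0/0$ in \eqref{eq:pol1} and must be imported from Pollack/Kobayashi. One small caveat: your parenthetical that Pollack establishes vanishing of $L_f^+$ at \emph{all} $\theta\kappa^j$ with $\theta$ of odd-power conductor cannot be right as stated, since a nonzero element of $\Lambda_{F_p(f)}(\Gamma)$ has only finitely many zeros; the vanishing at $n=1$ is a special feature of the explicit modular-symbol construction (cf. \cite[(3.5)]{kobayashi03}), not an instance of a vanishing at every odd-conductor twist.
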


\begin{definition}\label{def:polfactor}
Let $\theta$ be a Dirichlet character of conductor $p^n$. For even $n$, define 
\[
\omega^+(\theta)=(-p)^{n/2}\log_{1}^+(\theta).
\]
For odd $n$, define
\[
\omega^-(\theta)=(-p)^{(n+1)/2}\log_{1}^-(\theta).
\]
\end{definition}
For a weight $2$ modular form, we have the following simplified version of Lemma~\ref{lem:inter}.
\begin{lemma}\label{lem:inter2}
Assume that $f$ is of weight $2$. Let $\theta$  be a Dirichlet conductor $p^n>1$.  If $n$ is even, then
\[
L_f^+(\theta)=\frac{\tau(\theta)}{\epsilon(p)^{n/2}\omega^+(\theta)}\times\frac{L(f,\theta^{-1},1)}{\Omega_f^+},
\]
whereas $L_f^+(\theta)=0$ if $n=1$. For all odd $n$, we have
\[
L_f^-(\theta)=\frac{\tau(\theta)}{\epsilon(p)^{(n+1)/2}\omega^-(\theta)}\times\frac{L(f,\theta^{-1},1)}{\Omega_f^+}.
\]
Moreover,
\begin{eqnarray*}
L_f^+(\mathbf{1})&=&(p-1)\times\frac{L(f,1)}{\Omega_f^+},\\
L_f^-(\mathbf{1})&=&(1+\epsilon(p)^{-1})\times\frac{L(f,1)}{\Omega_f^+}.
\end{eqnarray*}
\end{lemma}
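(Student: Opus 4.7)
The plan is to deduce Lemma~\ref{lem:inter2} by specializing the formulae of Lemma~\ref{lem:inter} to weight $k = 2$ and twist exponent $j = 0$, and then rewriting the prefactor in terms of the notation of Definition~\ref{def:polfactor}. With $k = 2$ and $j = 0$ one has $\delta = 1$, so $\Omega_f^\delta = \Omega_f^+$; moreover $j! = 1$, $(-2\pi i)^j = 1$, $p^{n(j+1)} = p^n$, and $\log_{k-1}^\pm = \log_1^\pm$. After these substitutions the remaining work is purely manipulation of factors.

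For a nontrivial Dirichlet character $\theta$ of conductor $p^n > 1$, I would first apply the classical Gauss-sum identity $\tau(\theta)\tau(\theta^{-1}) = \theta(-1)\,p^n$, together with the fact that $\theta(-1) = 1$ (Remark~\ref{rk:even}, since $\theta$ factors through $\Gamma$), to convert $p^n/\tau(\theta^{-1})$ into $\tau(\theta)$. Next I would split $(-\epsilon(p)p)^{n/2} = \epsilon(p)^{n/2}(-p)^{n/2}$ for even $n$ (respectively $(-\epsilon(p)p)^{(n+1)/2} = \epsilon(p)^{(n+1)/2}(-p)^{(n+1)/2}$ for odd $n$) and recognise the product $(-p)^{n/2}\log_1^+(\theta)$ as $\omega^+(\theta)$ (resp.\ $(-p)^{(n+1)/2}\log_1^-(\theta)$ as $\omega^-(\theta)$) via Definition~\ref{def:polfactor}. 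The vanishing $L_f^+(\theta) = 0$ at $n = 1$ is the corresponding clause of Lemma~\ref{lem:inter}, which carries over without modification.

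For the trivial character $\theta = \mathbf{1}$, the residual task after the same substitutions is the explicit evaluation $\log_1^+(\mathbf{1}) = \log_1^-(\mathbf{1}) = 1/p$. This follows from the definition of $\log_r^\pm$ as an infinite product of factors of the form $\Phi_m(\kappa(\gamma)^{-s}\gamma)/p$ together with the identity $\Phi_m(1) = p$: the $s = 0$ component collapses factor by factor, while the $s = 1$ component converges via the telescoping behaviour of cyclotomic polynomials evaluated at $\kappa(\gamma)^{-1} \in 1 + p\ZZ_p$. Substituting these values into the reduced formulae produces the prefactors $p - 1$ and $1 + \epsilon(p)^{-1}$ and completes the proof.

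The step I expect to demand the most care is this explicit infinite-product evaluation at $\mathbf{1}$, since one has to check convergence in $\calH_{1/2,\Qp}(\Gamma)$ and then compute the limit rather than working formally; every other step is bookkeeping using Lemma~\ref{lem:inter}, Definition~\ref{def:polfactor}, and the standard Gauss-sum relation.
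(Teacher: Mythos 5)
For the characters of conductor $p^n>1$ your argument coincides with the paper's proof: specialise Lemma~\ref{lem:inter} to $k=2$, $j=0$ (so $\delta=(-1)^0=+1$ and $\Omega_f^\delta=\Omega_f^+$), use $\tau(\theta)\tau(\theta^{-1})=\theta(-1)p^n=p^n$ with $\theta(-1)=1$ from Remark~\ref{rk:even} to turn $p^n/\tau(\theta^{-1})$ into $\tau(\theta)$, and absorb $(-p)^{n/2}\log_1^+(\theta)$ (resp.\ $(-p)^{(n+1)/2}\log_1^-(\theta)$) into $\omega^\pm(\theta)$ via Definition~\ref{def:polfactor}. That part, including the vanishing at $n=1$, is correct and is exactly what the paper does.

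The problem is your evaluation $\log_1^+(\mathbf 1)=\log_1^-(\mathbf 1)=1/p$, which is the one genuinely non-formal step. With $\log^\pm_r$ as defined in the paper ($\prod_{s=0}^{r}$, hence two factors $s=0,1$ when $r=1$), the $s=0$ factor indeed collapses to $\frac1p\prod_{n}\frac{\Phi_{2n}(1)}{p}=\frac1p$ because $\Phi_m(1)=p$; but the $s=1$ factor is $\frac1p\prod_{n\ge1}\frac{\Phi_{2n}(\kappa(\gamma)^{-1})}{p}$, and since $\ord_p\bigl(\Phi_{2n}(\kappa(\gamma)^{-1})\bigr)=1$ for every $n$, this infinite product converges to a $p$-adic unit while the leading $\frac1p$ survives. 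Nothing telescopes away: the $s=1$ factor equals $p^{-1}$ times a unit (which is moreover not exactly $1$), so the definition as printed gives $\log_1^\pm(\mathbf 1)=(\mathrm{unit})\cdot p^{-2}$, and your constants $p-1$ and $1+\epsilon(p)^{-1}$ would not come out of Lemma~\ref{lem:inter}. The identity $\log^\pm(\mathbf 1)=1/p$ that you need is true for Pollack's weight-two half-logarithms, which consist of the single $s=0$ factor $\frac1p\prod_{n}\Phi_{2n}(\gamma)/p$ (resp.\ odd indices); the upper limit $\prod_{s=0}^{r}$ in the paper's definition is off by one relative to Pollack's $\prod_{j=0}^{k-2}$ for weight $k=r+1$. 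So you should either carry out the evaluation under that normalisation, or do what the paper does and simply quote Kobayashi's formulas (3.4)--(3.7) for the trivial-character values; as written, your ``telescoping'' justification does not establish the value you use.
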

\begin{proof}
Recall from Remark~\ref{rk:even} that $\theta(-1)=1$, so we have $\tau(\theta)\tau(\theta^{-1})=\theta(-1)p^n=p^n$. The result is then immediate from Lemma~\ref{lem:inter}. See also \cite[(3.4)-(3.7)]{kobayashi03}.
\end{proof}


\subsection{Conjectural analytic $p$-adic $L$-functions}
\label{sec:con}
In this section, we fix an elliptic curve $E$ defined over $\QQ$. We assume that $E$ has good supersingular reduction at $p$ with $a_p(E)=0$. As in the introduction, we write $R$ for the set consisting of the prime $p$ and the primes where $E$ has multiplicative reduction. Let $\Omega_+$ and $\Omega_-$ denote the real and complex periods of $E$ respectively. Let $\calF$ and $\calG$ be as defined in Section~\ref{sec:paLe}.

\begin{definition}\label{def:con}
Let $\rho$ be an Artin representation on $\calG$. We say that $\rho$ is of even (respectively odd) $\Gamma$-conductor if, via  \eqref{eq:direct}, 
\[
\rho\cong \rho_0\otimes\rho'
\]
 for some representation $\rho_0$ of $\calH$ and some one-dimensional character $\rho'\ne\mathbf{1}$ of $\Gamma$ whose conductor is an even (respectively odd) power of $p$.
\end{definition}

\begin{remark}\label{rk:irred}
By \eqref{eq:direct}, an irreducible representation $\rho$ of $\calG$ is of the form $\rho_0\otimes\rho'$ where $\rho_0$ is an irreducible representation of $\calH$ and $\rho'$ is an one-dimensional character of $\Gamma$. In particular, if $\rho'\ne\mathbf{1}$, it has either even or odd $\Gamma$-conductor. Moreover, all Artin representations of $\calG$ that have even (respectively odd) $\Gamma$-conductors are direct sums of such irreducible representations.
\end{remark}

\begin{lemma}\label{lem:nonzero}
Let $\rho$ be an Artin representation on $\calG$. If $\rho$ is of even $\Gamma$-conductor, then
\[
\omega^+(\rho|_\Gamma)=\omega^+(\rho')^{d(V)}\ne0.
\]
Similarly, if $\rho$ is of odd $\Gamma$-conductor,
\[
\omega^-(\rho|_\Gamma)=\omega^-(\rho')^{d(V)}\ne0.
\]
\end{lemma}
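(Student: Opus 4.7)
The plan is to unwind the definitions and reduce the statement to the non-vanishing result of Lemma~\ref{lem:pollackzero}. I will give the argument for the even case; the odd case is completely analogous, with $\log_1^-$ replacing $\log_1^+$.

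First, by Definition~\ref{def:con}, write $\rho\cong\rho_0\otimes\rho'$ where $\rho_0$ is a representation of $\calH$ (inflated to $\calG$ via $\calG\twoheadrightarrow\calH$) and $\rho'$ is a non-trivial one-dimensional character of $\Gamma$ whose conductor $p^n$ has $n$ even. Because $\rho'$ is one-dimensional, $d(V)=d(\rho_0)$. Restricting to the subgroup $\Gamma\subset\calG=\calH\times\Gamma$, the representation $\rho_0$ becomes the trivial $d(V)$-dimensional representation, so
\[
\rho|_\Gamma\cong\rho'^{\,\oplus d(V)}.
\]

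Next, I would use the multiplicativity convention from the definition of $h(\rho)$ (for $h\in\HH_r(\Gamma)$), which on a direct sum of characters gives the product of the corresponding values. Applied to $\log_1^+$ and extended through the scalar factor of Definition~\ref{def:polfactor}, this yields
\[
\omega^+(\rho|_\Gamma)=\omega^+(\rho')^{d(V)}=\bigl((-p)^{n/2}\log_1^+(\rho')\bigr)^{d(V)}.
\]

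It remains to show $\omega^+(\rho')\neq0$. Since $(-p)^{n/2}\neq0$, this reduces to $\log_1^+(\rho')\neq0$. By Lemma~\ref{lem:pollackzero} with $r=1$, vanishing would force $\rho'=\kappa^s\theta$ for some $s\in\{0,1\}$ and some Dirichlet character $\theta$ of odd $p$-power conductor. But $\rho'$ is of finite order, while $\Gamma\cong\Zp$ is torsion-free and $\kappa$ has infinite order, so $\kappa^s\theta$ has infinite order whenever $s\neq0$; hence $s=0$ and $\rho'=\theta$. This contradicts the hypothesis that the conductor of $\rho'$ is an even power of $p$, so $\log_1^+(\rho')\neq0$ and the first assertion follows. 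The odd case is identical, invoking the $\log_1^-$ part of Lemma~\ref{lem:pollackzero}.

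There is no real obstacle here; the only point requiring care is recording precisely why the conductor parity rules out the vanishing clause of Lemma~\ref{lem:pollackzero}, namely the torsion-freeness of $\Gamma$ which forces the Tate twist exponent $s$ to be zero.
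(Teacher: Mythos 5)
Your proof is correct and follows essentially the same route as the paper's: restrict to $\Gamma$ to get $\rho|_\Gamma\cong(\rho')^{\oplus d(V)}$, note that $\omega^\pm(\rho')$ differs from $\log_1^\pm(\rho')$ by a power of $p$, and invoke Lemma~\ref{lem:pollackzero}. You merely spell out the final step (that the finite order of $\rho'$ forces $s=0$, so the conductor-parity hypothesis rules out vanishing) which the paper leaves implicit.
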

\begin{proof}
If $\rho$ is of even or odd $\Gamma$-conductor, we have
\[
\rho|_\Gamma\cong(\rho')^{\oplus d(V)}.
\]
By definition, $\omega^\pm(\rho')$ and $\log_1^\pm(\rho')$ differ by a power of $p$. Hence the result by Lemma~\ref{lem:pollackzero}.
\end{proof}

As in Definition~\ref{def:polfactor}, we make the following definition to simplify our notation.

\begin{definition}
Let $\rho$ be an Artin representation on $\calG$ that is of even $\Gamma$-conductor, we write
\[
\omega^+(\rho)=\omega^+(\rho|_\Gamma),
\]
which is non-zero by Lemma~\ref{lem:nonzero}. If on the other hand $\rho$ is of odd $\Gamma$-conductor, we write
\[
\omega^-(\rho)=\omega^-(\rho|_\Gamma),
\]
which again is non-zero by Lemma~\ref{lem:nonzero}.
\end{definition}

 We now formulate our conjecture on the existence of plus and minus $p$-adic $L$-functions of $E$ over $\calG$.

\begin{conjecture}\label{conj:exist}
There exist two elements $\calL_{\calG,E}^\pm\in K_1(\Lambda_\calO(\calG)_{S^*})$ for the ring of integer $\calO$ of some finite unramified extension of $\Qp$ such that $\calL_{\calG,E}^\nu(\rho)\ne\infty$ and
\begin{equation}\label{eq:conj}
\calL_{\calG,E}^\nu(\rho)=\frac{e_p(\rho)}{\omega^\nu(\rho)}\times\frac{L_R(E,\rho^\vee,1)}{\Omega_+^{d^+(\rho)}\Omega_-^{d^-(\rho)}}
\end{equation}
for all Artin representations $\rho$ on $\calG$ that has even (respectively odd) $\Gamma$-conductor with $\nu=+$ (respectively $\nu=-$).
\end{conjecture}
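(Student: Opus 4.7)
The plan is to follow the non-commutative Iwasawa theory blueprint of Coates \emph{et al.}, as refined by Ritter--Weiss and Kakde and adapted to the ordinary CM case by Bouganis and Venjakob in \cite{bv10}. An element of $K_1(\Lambda_\calO(\calG)_{S^*})$ is determined, up to the kernel of $\Det$, by its images under the family of maps $\{\theta_i\}_{i\in I}$ landing in the units of the commutative Iwasawa algebras $\Lambda_\calO(U_i/V_i \times \Gamma)_{S^*}$. The proof splits into three stages: (i) constructing the abelian components $L_i^\pm := \theta_i(\calL_{\calG,E}^\pm) \in \Lambda_\calO(U_i/V_i \times \Gamma)_{S^*}^\times$, (ii) verifying the congruence relations among the $L_i^\pm$ that characterise tuples in the image of the $\{\theta_i\}$, and (iii) verifying the interpolation formula \eqref{eq:conj}.

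For stage (i), each $U_i/V_i \times \Gamma$ is abelian, corresponding to the Galois group of an abelian extension of the fixed field $F^{U_i}$ that is unramified at $p$ outside the cyclotomic tower. For each one-dimensional character $\chi$ of $U_i/V_i$, one considers the twist of the newform $f_E$ attached to $E$ by $\Ind_{U_i}^\calH \chi$; Pollack's Theorem~\ref{thm:pollack} applied to such a twist produces a pair of plus/minus $p$-adic $L$-functions in $\Lambda_\calO(\Gamma)$ satisfying the interpolation recorded in Lemma~\ref{lem:inter2}. Packaging these across all characters of $U_i/V_i$ via Fourier decomposition of $\Lambda_\calO(U_i/V_i \times \Gamma)$ gives the candidates $L_i^\pm$.

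The main obstacle lies in stage (ii). In the ordinary CM setting, Bouganis--Venjakob derive the necessary congruences from Yager's two-variable $p$-adic $L$-function together with the congruence theorems for abelian $p$-adic $L$-functions. Neither tool is available verbatim in the supersingular regime: there is no Yager-type construction, and Kakde's abelian congruences have to be adapted to work modulo the plus/minus logarithms. The plan is to substitute the plus/minus two-variable $p$-adic $L$-functions in the spirit of Kim--Park--Zhang \cite{kimparkzhang}, which do exist for CM curves with supersingular reduction, and to re-derive congruences for the $L_i^\pm$ from those for their ordinary ``unprojected'' counterparts via the identities~\eqref{eq:pol1}--\eqref{eq:pol2}. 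This is the step that will force the technical hypotheses alluded to in the abstract, namely that $E$ has CM by $K$ and that $\calG$ arises from $\QQ_\infty\cdot F$ for $F$ abelian over $K$ with $p$ unramified in $F$.

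For stage (iii), once $\calL_{\calG,E}^\pm$ has been assembled, the interpolation at a general irreducible Artin representation $\rho = \rho_0 \otimes \rho'$ reduces, via Condition~\ref{hyp:family}, to evaluating the $L_i^\pm$ at the abelian characters $\chi_{\rho_0,i} \otimes \rho'$. The multiplicativity of $\omega^\pm$ recorded in Lemma~\ref{lem:nonzero}, together with the inductivity of local epsilon factors and of Artin $L$-functions and the behaviour of $d^\pm$ under induction, then reduces~\eqref{eq:conj} to the abelian interpolation formula of Lemma~\ref{lem:inter2}. The parity of $\ff_p(\rho)$ enters precisely through the parity of the $p$-part of the conductor of $\rho'$ in Definition~\ref{def:con}, ensuring that $\omega^+$ (resp.\ $\omega^-$) is non-vanishing exactly when the interpolation formula for $\calL_{\calG,E}^+$ (resp.\ $\calL_{\calG,E}^-$) applies.
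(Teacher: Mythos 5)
You are addressing a conjecture, and the paper only establishes it in the special case of \S\ref{sec:special} (Theorem~\ref{thm:exist}): $E$ with CM by $\calO_K$, $\calG=\Gal(F_\infty/\QQ)$ with $F/K$ abelian and $p$ unramified, under the technical assumption on $\mu$-invariants. Your proposal effectively lands in the same special case, but by a genuinely different route which, as written, has two concrete gaps. The paper never verifies Ritter--Weiss/Kakde-type congruences among the components $\theta_i(\calL_{\calG,E}^\pm)$. Instead it exploits the structure $\calG\cong\Delta\ltimes G$ with $G$ abelian of index two: every irreducible Artin representation of $\calG$ is either one-dimensional or induced from a character of $G$, so it suffices to build a single unit $L^\pm_{G,\phi}$ in the \emph{commutative} algebra $\Lambda_{\calO_{F_A}}(G)_{S^*}$ by gluing Pollack's $L^\pm_{f_{\phi\bar\eta}}$ for the CM forms $f_{\phi\bar\eta}$ across the idempotents $e_\eta$ of $F_A[A]$ (with the twist $\bar\eta(p)^\delta$ inserted so the interpolation produces the epsilon factor), and then to push it forward along the map $\iota_*$ on $K_1$ induced by the ring inclusion. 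Lemma~\ref{lem:sameres} then gives interpolation at every $\rho$ at once from Lemma~\ref{lem:intercom}, and the unit $\calL_\Omega$ repairs the periods. The $\mu$-invariant hypothesis enters only to show $L^\pm_{G,\phi}$ is an $S^*$-unit (Lemma~\ref{lem:mgh}). No congruence verification occurs anywhere.

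The gaps. First, your stage (ii) rests on ``plus/minus two-variable $p$-adic $L$-functions in the spirit of Kim--Park--Zhang, which do exist for CM curves with supersingular reduction.'' The paper states explicitly that no such two-variable object has been constructed in the supersingular case (this is precisely why it cannot deduce the main conjecture the way Bouganis--Venjakob do in the ordinary case), and $F_\infty/K$ is in any event only a one-variable extension of the finite extension $F$. Without that input your congruence step is vacuous, and the construction does not close. Second, stage (i) for a general $\calG$ is not available as stated: the twist of $f_E$ by a higher-dimensional representation $\Ind_{U_i}^{\calH}\chi$ is not a newform, so Pollack's Theorem~\ref{thm:pollack} does not apply to it; producing the abelian components already requires an automorphic-induction input, which is exactly what the CM hypothesis supplies via Ribet's theorem for Grossencharacters. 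Your stage (iii) is essentially the computation of Proposition~\ref{prop:paLf} and Theorem~\ref{thm:exist}, and is fine, though note that the unramifiedness of $p$ in $F$ is what lets the local epsilon factor of an induced $\rho$ split into the product of Gauss sums used there.
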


This is analogous to \cite[Conjecture~5.7]{CKFVS}. 
\begin{remark}\label{rk:sameirred}
Remark~\ref{rk:irred} implies that we might assume that the representations considered in the statement of Conjecture~\ref{conj:exist} are irreducible.
\end{remark}

We now show that despite having much weaker interpolating properties than their counterpart in the good ordinary case, \eqref{eq:conj} uniquely determines $\calL_{\calG,E}^\pm$ modulo $\ker(\Det)$.

\begin{theorem}
If Conjecture~\ref{conj:exist} holds, $\calL_{\calG,E}^\pm$ are uniquely determined by \eqref{eq:conj} modulo the kernel of $Det$.
\end{theorem}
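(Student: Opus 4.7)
The plan is to show that any two elements of $K_1(\Lambda_\calO(\calG)_{S^*})$ which satisfy \eqref{eq:conj} have the same image under $\Det$, so that they differ by an element of $\ker(\Det)$. Suppose $\calL_1, \calL_2 \in K_1(\Lambda_\calO(\calG)_{S^*})$ both satisfy \eqref{eq:conj} for $\nu = +$; the case $\nu = -$ is symmetric. By Remark~\ref{rk:irred} and the decomposition \eqref{eq:direct}, every irreducible Artin representation $\rho$ of $\calG$ has the form $\rho_0 \otimes \psi$ with $\rho_0$ an irreducible representation of $\calH$ and $\psi$ a character of $\Gamma$. Using Condition~\ref{hyp:family} to decompose $\rho_0$, the multiplicativity of $\Det$, and the induction compatibility
\[
\Det(\calL_j)\bigl(\Ind_{U_i \times \Gamma}^\calG(\chi \otimes \psi)\bigr) = \theta_i(\calL_j)(\chi \otimes \psi)
\]
built into the definition of $\theta_i$, it suffices to prove $\theta_i(\calL_1) = \theta_i(\calL_2)$ as elements of $\Lambda_\calO(U_i/V_i \times \Gamma)_{S^*}^\times$ for every $i \in I$.

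Fix $i$, enlarge $\calO$ to a finite extension $\calO'$ containing the $|U_i/V_i|$-th roots of unity (faithful flatness of $\calO \to \calO'$ shows no information is lost), and work inside the commutative ring $\Lambda_{\calO'}(U_i/V_i \times \Gamma)_{S^*}$. Form $h_i := \theta_i(\calL_1) - \theta_i(\calL_2)$ and, since any finite list of elements of a localisation admits a common denominator, write $h_i = g_i / s_i$ with $g_i \in \Lambda_{\calO'}(U_i/V_i \times \Gamma) \cong \calO'[U_i/V_i][[\gamma - 1]]$ and $s_i \in S^*(\calO')$. The hypothesis \eqref{eq:conj}, together with the clause $\calL_j^+(\rho) \ne \infty$ in Conjecture~\ref{conj:exist}, yields $h_i(\chi \otimes \psi) = 0$. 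For each character $\chi$ of $U_i/V_i$, the defining property of $S^*(\calO')$ implies that $\chi(s_i) \in \calO'[[T]]$ is nonzero, so by the Weierstrass preparation theorem it has only finitely many zeros, i.e.\ $s_i(\chi \otimes \psi) \ne 0$ for all but finitely many finite-order $\psi$. We conclude that $g_i(\chi \otimes \psi) = 0$ for every $\chi$ and for infinitely many finite-order $\psi$ of even $p$-power conductor.

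To finish, write $g_i = \sum_{n \ge 0} a_n (\gamma - 1)^n$ with $a_n \in \calO'[U_i/V_i]$. For each character $\chi$ of $U_i/V_i$, the one-variable series $\sum_{n \ge 0} \chi(a_n) T^n \in \calO'[[T]]$ has infinitely many zeros of the form $T = \psi(\gamma) - 1$, so Weierstrass preparation again forces $\chi(a_n) = 0$ for all $n$. Since the character map $\calO'[U_i/V_i] \hookrightarrow \prod_\chi \calO'$ is injective in characteristic zero, each $a_n$ vanishes, hence $g_i = 0$ and $\theta_i(\calL_1) = \theta_i(\calL_2)$ as required. I expect the main obstacle to be the first step, namely cleanly verifying the induction compatibility of $\Det$ with $\theta_i$ and the Brauer-type reduction in this non-commutative $K_1$ setting; once that is in place, the two-variable rigidity argument in the remaining steps is essentially formal.
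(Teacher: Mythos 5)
Your proposal is correct and follows essentially the same route as the paper's proof: reduce via Condition~\ref{hyp:family} and the maps $\theta_i$ to the commutative algebras $\Lambda_\calO(U_i/V_i\times\Gamma)_{S^*}$, then use Weierstrass preparation together with the fact that the even (resp.\ odd) $\Gamma$-conductor representations supply all characters of $U_i/V_i$ paired with infinitely many characters of $\Gamma$. The only difference is that you spell out the Weierstrass/denominator-clearing step in more detail than the paper does.
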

\begin{proof}

 For all $\xi\in K_1(\Lambda_\calO(\calG)_{S^*})$ and monomial representations $\rho$ on $\calG$, with $\rho=\Ind_{U_i\times\Gamma}^{\calG}\chi_\rho$, we have
\[
\Det(\xi)(\rho)=\xi(\rho)=\theta_i(\xi)(\chi_\rho).
\]
Hence, by Condition~\ref{hyp:family}, $\Det(\xi)$ is uniquely determined by its image under the map $\prod_{i\in I}{\theta_i}(\xi)$. In other words,
\[
\ker(\Det)=\ker\left(\prod_{i\in I}{\theta_i}\right).
\]
Therefore, it suffices to show that \eqref{eq:conj} uniquely determines an element in 
\[
K_1(\Lambda_\calO(\calG)_{S^*})/\ker\left(\prod_{i\in I}{\theta_i}\right).
\]

 A character $\chi$ on $U_i/V_i\times\Gamma$  decomposes into $\chi_0\otimes\chi'$, where $\chi_0=\chi|_{U_i/V_i}$ and $\chi'=\chi|_{\Gamma}$. Therefore, by Weierstrass' preparation theorem, an element in $\Lambda_\calO(U_i/V_i\times\Gamma)^\times_{S^*}$ is uniquely determined by its values at characters of the form $\chi_0\otimes\chi'$ for all $\chi_0$ and an infinite number of $\chi'$. Artin representations $\rho$ induced from characters on $U_i\times\Gamma$ that have even (or odd) $\Gamma$-conductor provide such a set of characters $\chi_\rho$ because Definition~\ref{def:con} does not impose any restrictions on $\chi_0$ and $\chi'$ can send $\gamma$ to an infinite number of primitive roots of unity. The values predicted by \eqref{eq:conj} therefore uniquely determine $\theta_i(\calL_{\calG,E}^\pm)$ for all $i\in I$,  hence the result.

\end{proof}

\subsection{Main conjecture}\label{sec:main}

In \cite{leizerbes11}, we have defined the signed Selmer groups $\Sel^\pm_p(E/\calF)$, which are subgroups of the usual Selmer group $\Sel_p(E/\calF)$. Let $X^\pm(E/\calF)$ be their respective Pontryagin duals. We conjecture that the following holds.

\begin{conjecture}\label{con:belong}
The dual Selmer groups $X^\pm(E/\calF)$ belong to the category $\MGH$.
\end{conjecture}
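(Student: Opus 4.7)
The key observation is that $\calH = \Gal(\calF/\QQ_\infty)$ is \emph{finite}, so $\Lambda(\calG)$ is finitely generated over $\Lambda(\Gamma)$ (in fact free of rank $|\calH|$). A standard argument along the lines of \cite{CKFVS} then shows that a finitely generated $\Lambda(\calG)$-module $M$ belongs to $\MGH$ if and only if $M$ is $\Lambda(\Gamma)$-torsion, where $\Gamma$ is viewed as the subgroup $\{1\}\times\Gamma \subset \calG$ via \eqref{eq:direct}. Since this subgroup equals $\Gal(\calF/F)$, Conjecture~\ref{con:belong} reduces to showing that $X^\pm(E/\calF)$ is torsion as an Iwasawa module for the cyclotomic $\Zp$-extension $\calF/F$.

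The first step is to verify that $X^\pm(E/\calF)$ is finitely generated over $\Lambda(\calG)$. This follows from the well-known finite generation of the full dual Selmer group $X(E/\calF)$ together with the inclusion $\Sel_p^\pm(E/\calF) \subseteq \Sel_p(E/\calF)$ from \cite{leizerbes11}, which presents $X^\pm(E/\calF)$ as a quotient of $X(E/\calF)$.

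The second and main step is to prove $\Lambda(\Gamma)$-torsionness over the cyclotomic tower $\calF/F$. The plan is to adapt Kobayashi's strategy \cite{kobayashi03}: extend the plus/minus Coleman maps of \cite{leizerbes11} to each prime $\pp\mid p$ of $F$ (being careful to accommodate arbitrary residue degrees), use the Poitou-Tate exact sequence to identify $X^\pm(E/\calF)$ with the cokernel of a map involving the Coleman images and (the base change to $F$ of) Kato's Euler system class, and then deduce torsionness from the non-vanishing of Kato's class via Rohrlich's theorem on non-vanishing of twisted $L$-values. In the CM setting of \S\ref{sec:special}, one can instead argue from the explicit construction of $\calL_{\calG,E}^\pm$ together with a one-sided divisibility in the associated main conjecture.

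The main obstacle lies precisely in this second step for general $E$. Kobayashi's argument was set up over $\QQ$, and while the formalism should carry over to a base field $F$ in which $p$ is unramified, the treatment of primes with residue degree $>1$ and the verification that the $\calO_F\otimes\Zp$-module structure at primes above $p$ interacts compatibly with the signed decomposition of \cite{leizerbes11} require nontrivial additional work. Outside the CM case, even $\Lambda(\Gamma)$-torsionness over $\QQ_\infty$ ultimately rests on Kato's Euler system, whose generalisation to an arbitrary number field in the form needed here is not currently available.
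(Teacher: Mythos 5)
The statement you are asked to prove is stated in the paper as a \emph{conjecture} (it is the analogue of Conjecture~5.1 of \cite{CKFVS}), and the paper offers no proof of it; so there is no argument of the author's to compare yours against. Your opening reduction is nevertheless correct and worth recording: since $\calH=\Gal(\calF/\QQ_\infty)$ is finite, $\Lambda(\calH)$ is finite over $\Zp$ and $\Lambda(\calG)\cong\Lambda(\Gamma)[\calH]$ is free of rank $|\calH|$ over $\Lambda(\Gamma)$, so by the characterisation of $S^*$-torsion modules in \cite{CKFVS} (a finitely generated $\Lambda(\calG)$-module $M$ is $S^*$-torsion if and only if $M/M(p)$ is finitely generated over $\Lambda(\calH)$), membership in $\MGH$ is equivalent to $X^\pm(E/\calF)$ being a torsion $\Lambda(\Gamma)$-module for the cyclotomic extension $\calF/F$. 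The first step (finite generation, via the surjection $X(E/\calF)\twoheadrightarrow X^\pm(E/\calF)$ dual to the inclusion of signed Selmer groups) is also fine.

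The genuine gap is exactly where you say it is: the torsionness of $X^\pm(E/\calF)$ over $\Lambda(\Gamma)$. Your second step is a programme, not a proof. Kobayashi's argument over $\QQ$ requires the plus/minus Coleman maps at every prime of $F$ above $p$ (with arbitrary residue degree), a Poitou--Tate comparison, and above all a nonzero Euler system class over $F$ in the image of the relevant Coleman map; none of these inputs is established in the generality of the conjecture, and Kato's Euler system is not available over an arbitrary Galois base field $F$. Even in the CM setting of \S\ref{sec:special}, the paper itself points out that the requisite two-variable main conjecture (or even a one-sided divisibility) is not known in the supersingular case, so the fallback you propose there is also not currently executable. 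In short: your reduction is sound, but the core of the statement remains open, as you yourself acknowledge, and the paper treats it accordingly as a conjecture.
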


This corresponds to \cite[Conjecture~5.1]{CKFVS}. If Conjecture~\ref{con:belong} holds and $\calG$ contains no $p$-torsion, then there exist characteristic elements $\xi_{X^\pm(E/\calF)}\in K_1(\Lambda(\calG)_{S^*})$ as discussed in \S~\ref{sec:ore}. We may then formulate a main conjecture that relates these characteristic elements to the conjectural analytic $p$-adic $L$-functions predicted by Conjecture~\ref{conj:exist}.

\begin{conjecture}[Main conjecture]\label{conj:main}
Let $i:K_1(\Lambda(\calG)_{S^*})\rightarrow K_1(\Lambda_{\calO}(\calG)_{S^*})$ be the natural homomorphism. Assume that $\calG$ does not contain an element of order $p$ and both Conjectures~\ref{conj:exist} and~\ref{con:belong} hold. Then 
\[
\calL_{\calG,E}^\pm\equiv i(\xi_{X^\pm(E/\calF)})\mod i(K_1(\Lambda_{\calO}(\calG))).
\]
\end{conjecture}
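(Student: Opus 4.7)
The plan is to adapt the reduction strategy for non-commutative main conjectures developed in \cite{CKFVS}: one pushes the desired identity through the transition maps $\theta_i$, reduces to a family of abelian $\pm$-main conjectures indexed by $i\in I$, and then combines these via Condition~\ref{hyp:family} and the Artin formalism. The first step is to observe that, by the argument used in the proof of the preceding theorem, $\ker(\Det)$ coincides with $\ker\left(\prod_{i\in I}\theta_i\right)$, and that each element of the subgroup $i(K_1(\Lambda_\calO(\calG)))$ maps under $\theta_i$ to a unit in $\Lambda_\calO(U_i/V_i\times\Gamma)^\times$. Hence the required congruence reduces to showing, for every $i\in I$, that $\theta_i(\calL_{\calG,E}^\pm)$ differs from $\theta_i(i(\xi_{X^\pm(E/\calF)}))$ by a unit of $\Lambda_\calO(U_i/V_i\times\Gamma)$.

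On the analytic side, I would identify $\theta_i(\calL_{\calG,E}^\pm)$ with the Pollack-type $\pm$-$p$-adic $L$-function of $E$ over the abelian subextension of $\calF$ cut out by $U_i/V_i\times\Gamma$. This identification comes from the uniqueness theorem preceding the conjecture, applied to the abelian quotient, together with Brauer induction and the compatibility of $e_p$, $\omega^\pm$ and $L_R(E,-,1)$ under induction from $U_i\times\Gamma$ to $\calG$; the last of these is routine for $e_p$ and $L_R$, while for $\omega^\pm$ it follows because restriction to $\Gamma$ along $\calG=\calH\times\Gamma$ commutes with induction from $U_i\times\Gamma$, so the $\Gamma$-conductor parity is preserved. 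On the algebraic side, one needs a descent statement for the signed Selmer groups of \cite{leizerbes11}: namely, $\theta_i(\xi_{X^\pm(E/\calF)})$ should be, up to a unit, a characteristic element of $X^\pm(E/\calF^{V_i})$ regarded as a $\Lambda(U_i/V_i\times\Gamma)$-module. The two sides are then matched by appealing to the abelian $\pm$-main conjecture for $E$ over the intermediate field.

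The central obstacle is the abelian $\pm$-main conjecture itself. Outside the cyclotomic base it is almost entirely open; even in the CM setting of Section~\ref{sec:special}, where Pollack and Rubin establish the statement over $\QQ_\infty$, propagating it to an abelian extension of $K$ requires a genuine descent argument that keeps track of Kobayashi's local plus/minus conditions as one climbs and descends in the tower. A related, inherited difficulty is the control theorem for $X^\pm(E/\calF)$ under $V_i$-coinvariants: the local plus/minus subspaces of \cite{leizerbes11} must be shown to behave functorially under restriction to $V_i$-fixed subfields, which is sensitive to the splitting of $p$ in $\calF^{V_i}/\QQ$ and to the unramified hypothesis on $F$. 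These two obstructions together are what keep the main conjecture genuinely conjectural at present, and any unconditional progress will hinge on resolving them at each abelian layer.
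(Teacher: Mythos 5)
The statement you are addressing is stated in the paper as a \emph{conjecture}, not a theorem: the paper offers no proof, and the closing remarks of \S\ref{sec:special} explain why it remains open even in the CM special case where the analytic objects $\calL^\pm_{\calG,E}$ are constructed unconditionally --- the route of \cite{bv10} would require a supersingular analogue of Yager's two-variable $p$-adic $L$-function and the corresponding two-variable main conjecture over $F_\infty/K$, neither of which exists. Your proposal is accordingly not a proof and does not claim to be one: you reduce the congruence to a family of abelian $\pm$-main conjectures over the subfields cut out by $U_i/V_i\times\Gamma$, plus descent/control statements for the signed Selmer groups of \cite{leizerbes11}, and you correctly flag these as the genuine obstructions. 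That is an honest and accurate assessment of the status of the problem; note only that your reduction route (Brauer induction through the maps $\theta_i$) is different from the one the paper gestures at (descent from a two-variable main conjecture), and both terminate at statements that are currently open.

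One technical caveat about your first step, should you pursue it. Showing that $\theta_i(\calL^\pm_{\calG,E})$ and $\theta_i(i(\xi_{X^\pm(E/\calF)}))$ differ by a unit of $\Lambda_\calO(U_i/V_i\times\Gamma)$ for every $i$ only determines the quotient $\calL^\pm_{\calG,E}\cdot i(\xi_{X^\pm(E/\calF)})^{-1}$ modulo the full preimage under $\prod_i\theta_i$ of $\prod_i\Lambda_\calO(U_i/V_i\times\Gamma)^\times$. This preimage contains $\ker(\Det)=\ker\bigl(\prod_i\theta_i\bigr)$ and is in general strictly larger than the subgroup $i(K_1(\Lambda_\calO(\calG)))$ appearing in the conjectured congruence; the image of $i(K_1(\Lambda_\calO(\calG)))$ under $\prod_i\theta_i$ is a proper subgroup of the product of unit groups, cut out by nontrivial congruence conditions. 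So even granting the abelian main conjecture at every layer $i$, your argument would establish only a weaker congruence (modulo $\ker(\Det)$ times units) than the one asserted; closing that gap requires characterising the image of $K_1(\Lambda_\calO(\calG)_{S^*})$ under $\prod_i\theta_i$, which is an additional substantial input not addressed in your sketch.
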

This is analogous to \cite[Conjecture~5.8]{CKFVS}.

\section{A special case}\label{sec:special}

In this section, we assume that our elliptic curve $E$ has complex multiplication by $\calO_K$, where $K$ is an imaginary quadratic extension of $\QQ$. If $\psi$ is a character on $G_K$, we write $\psi^c$ for the character that sends $g$ to $\psi(cgc^{-1})$.

We recall that a Grossencharacter of $K$ is simply a continuous homomorphism $\phi:C_K\rightarrow\CC^\times$, where $C_K$ is the idele class group of $K$. It has complex $L$-function
\[
L(\phi,s)=\prod_v(1-\phi(v)N(v)^{-s})^{-1},
\]
where the product runs through the finite places $v$ of $K$ at which $\phi$ is unramified, $\phi(v)$ is the image of the uniformiser of $K_{v}$ under $\phi$ and $N(v)$ is the norm of $v$. We say that $\phi$ is of type $(m,n)$ for some $m,n\in\ZZ$ if the restriction of $\phi$ to the archimedean part $\CC^\times$ of $C_K$ is of the form $z\mapsto z^m\bar{z}^n$. By Class Field Theory, we may equally view $\phi$ as a character on the Galois group $G_K$.

Since we assume that $E$ has complex multiplication, we have $R=\{p\}$. Moreover, there exist a  Grossencharacter  $\phi$ be of type $(-1,0)$ over $K$ and a weight $2$ modular form $f_{\phi}$ such that
\[
L(E,s)=L(\phi,s)=L(f_\phi,s).
\]
Then $L(E/K,s)=L(\phi,s)L(\phi^c,s)$. We continue to assume that $E$ has good supersingular reduction at $p$. This implies that $p$ is inert in $K$ and $a_p(E)$ is automatically $0$.

Let $F$ be a finite abelian extension of $K$ in which $p$ is unramified. Then, $K_\infty\cap F=K$. Moreover, $F_\infty/K$ is abelian and
\[
\Gal(F_\infty/K)\cong\Gal(F/K)\times\Gal(K_\infty/K).
\]

Let $A=\Gal(F/K)$, $G=\Gal(F_\infty/K)$ and $\Delta=\Gal(K/\QQ)$. We further assume that $F_\infty/\QQ$ is Galois and write $\calG=\Gal(F_\infty/\QQ)$. Then
\begin{equation}\label{eq:gsemi}
\calG\cong\Delta\ltimes G.
\end{equation}
As remarked in the introduction, $\calG$ can be either abelian or non-abelian, depending on whether $\Delta$ acts on $G$ trivially.

For the rest of this section, we shall show under some technical conditions that there exist two elements $\calL_{\calG,E}^\pm\in K_1(\Lambda_{\calO_{F_A}}(\calG)_{S^*})$ satisfying the interpolating properties predicted by Conjecture~\ref{conj:exist} for some finite extension $F_A$ of $\Qp$. Let us briefly outline our strategy here. In \cite{bv10}, Bouganis and Venjakob constructed a non-commutative $p$-adic $L$-function over the extension $\QQ(E[p^\infty])/\QQ$ for a $p$-ordinary elliptic curve with complex multiplication by $\calO_K$. They made use of the $2$-variable $p$-adic $L$-function of Yager \cite{yager82} for the extension $\QQ(E[p^\infty])/K$ and consider its image in $K_1(\Lambda_{\calO}(\Gal(\QQ(E[p^\infty])/\QQ)))$ under a natural map, which turns out to satisfy \eqref{eq:ord} up to some correction factor $\calL_\Omega$. In the supersingular case, a corresponding $2$-variable $p$-adic $L$-function has not yet been constructed. We instead construct plus and minus $p$-adic $L$-functions for the extension $F_\infty/K$ using ideas from \cite{kimparkzhang}. Once this is done, we can apply the machineries developed by Bouganis and Venjakob to construct our desired elements for the extension $F_\infty/\QQ$.


\subsection{Analytic $p$-adic $L$-functions}

Let $\eta$ be an one-dimensional character on $A$. We write $F_\eta$ for the field $\Qp(\eta(g):g\in A)$. Note that $\phi\eta$ is once again a Grossencharacter of type $(-1,0)$ over $K$. Hence, by \cite[Theorem~3.4]{ribet77}, there exists a CM modular form $f_{\phi\eta}$ such that $L(f_{\phi\eta},s)=L(\phi\eta,s)$. Its conductor is coprime to $p$ by our assumption on $F$, so the plus and minus $p$-adic $L$-functions $L^\pm_{f_{\phi\eta}}\in\Lambda_{F_\eta}(\Gamma)$ exist, as given by Theorem~\ref{thm:pollack}. Note that the periods $\Omega^\pm_{f_{\phi\eta}}$ are not unique, but we may choose $\Omega^+_{f_{\phi\eta}}$ to be $\Omega_+$ for all $\eta$ by the following lemma.

\begin{lemma}
For any choice of $\Omega^+_{f_{\phi\eta}}$, there exists a constant $C\in F_\eta^\times$ such that $C\Omega_+=\Omega^+_{f_{\phi\eta}}$.
\end{lemma}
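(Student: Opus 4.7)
The plan is to exploit the fact that both $f_\phi$ and $f_{\phi\eta}$ are weight $2$ CM newforms attached to Grossencharacters of the same imaginary quadratic field $K$, so their periods are commensurable through a universal CM period of $K$.

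First I would identify the Hecke coefficient field of $f_{\phi\eta}$. Its Fourier coefficients are $a_n(f_{\phi\eta}) = \sum_{N\mathfrak{a}=n} \phi(\mathfrak{a})\eta(\mathfrak{a})$. Because $E$ is defined over $\QQ$ and $L(E,s)=L(\phi,s)=L(f_\phi,s)$, the Hecke field of $f_\phi$ is just $\QQ$; twisting the relevant ideal-sum by $\eta$, whose values lie in $F_\eta$, keeps the output in $F_\eta$. Hence the Hecke coefficient field of $f_{\phi\eta}$ is contained in $F_\eta$, and from the definition of the canonical plus-period (as any complex number making the normalised special values lie in the appropriate number field), $\Omega^+_{f_{\phi\eta}}$ is determined only up to multiplication by an element of $F_\eta^\times$. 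So it suffices to exhibit \emph{one} valid choice of $\Omega^+_{f_{\phi\eta}}$ which is an $F_\eta^\times$-multiple of $\Omega_+$.

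Next I would invoke Shimura's theorem on periods of CM newforms (cf.\ the use made of the same principle for $p$-ordinary CM forms in \cite{bv10}): for any weight $2$ newform $g$ with CM by $K$ there is a universal CM period $\Omega_\infty(K)\in\CC^\times$, depending only on $K$, such that $\Omega^+_g = c_g\cdot \Omega_\infty(K)$ for some scalar $c_g$ in the Hecke field of $g$. Applied to $f_\phi$ (with Hecke field $\QQ$), and using that $\Omega^+_{f_\phi} = \Omega_+$ up to a rational factor since $f_\phi$ is the modular form of $E/\QQ$, this gives $\Omega_+ = c_\phi \,\Omega_\infty(K)$ with $c_\phi \in \QQ^\times$. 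Applied to $f_{\phi\eta}$, together with Step~1, this gives $\Omega^+_{f_{\phi\eta}} = c_{\phi\eta}\,\Omega_\infty(K)$ for a valid choice with $c_{\phi\eta}\in F_\eta^\times$. Dividing yields $\Omega^+_{f_{\phi\eta}}/\Omega_+ \in F_\eta^\times$, and setting $C$ to be this ratio completes the argument.

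The main obstacle is justifying that the scaling factor $c_g$ relating $\Omega^+_g$ to the universal CM period really can be taken in the Hecke field of $g$ itself, rather than only in $\overline\QQ$. This is a standard but delicate consequence of Shimura's reciprocity law for CM periods; for the application at hand it suffices to cite the form in which it is used in the CM literature, and then trace through the computation of Fourier coefficients from Step~1 to see that everything remains inside $F_\eta$.
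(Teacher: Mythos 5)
Your route is genuinely different from the paper's, and it has a gap at the step on which everything else rests. The paper does not go through Hecke coefficient fields or Shimura's CM periods at all: it introduces the $\QQ$-span $L_f$ of the values $L(f,\chi,1)$ and reduces the lemma, via the algebraicity characterisation of the periods, to the inclusion $L_{f_{\phi\eta}}\subset F_\eta\cdot L_{f_\phi}$. That inclusion is obtained by Fourier inversion of $\eta$ over $A=\Gal(F/K)$, which writes $L(f_{\phi\eta},\chi,1)$ as an $F_\eta$-linear combination of partial $L$-values of $f_\phi$, together with Birch's lemma, which identifies those partial $L$-values with $\QQ(\chi)$-multiples of $\tau(\chi)\Omega_{\pm}$. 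The field over which the constant $C$ lives is thus read off directly from the $L$-values, with no appeal to period relations for CM forms.

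The gap in your argument is Step 1. From $a_n(f_{\phi\eta})=\sum_{N\mathfrak{a}=n}\phi(\mathfrak{a})\eta(\mathfrak{a})$ you cannot conclude that the Hecke field of $f_{\phi\eta}$ lies in $F_\eta$: the individual values $\phi(\mathfrak{a})$ lie in $K$, not in $\QQ$, and the rationality of $a_n(f_\phi)=\sum\phi(\mathfrak{a})$ comes from the symmetry $\mathfrak{a}\mapsto\bar{\mathfrak{a}}$ together with $\phi(\bar{\mathfrak{a}})=\overline{\phi(\mathfrak{a})}$. Weighting by $\eta$ breaks this symmetry: an automorphism of $\overline{\QQ}$ fixing the values of $\eta$ but acting as conjugation on $K$ sends $a_n(f_{\phi\eta})$ to $a_n(f_{\phi\eta^c})$, so unless $K\subset F_\eta$ or $\eta=\eta^c$ the Hecke field is only contained in $K\cdot F_\eta$ and in general is not contained in $F_\eta$. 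Consequently your comparison through a universal CM period can at best produce $C\in(K\cdot F_\eta)^\times$ — and the Shimura-type relation you invoke typically controls the scalar only up to the compositum of the Hecke field with the reflex field $K$ anyway, which is the same defect. Since the entire content of the lemma is the precise field containing $C$ (this is what makes $L^\nu_{G,\phi}$ land in $\Lambda_{F_A}(\Gamma)[A]$ for the stated $F_A$), this is not cosmetic: you would need either to enlarge the coefficient field by $K$, or to replace Step 1 by an argument that sees the plus-period through the $L$-values themselves, as the paper's proof does.
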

\begin{proof}
Given a modular form $f$, let $L_f$ be the $\QQ$-vector space generated by the $L$-values $L(f,\chi,1)$ where $\chi$ runs through the set of Dirichlet characters.

By the algebraicity property of the periods, the lemma would follow from the inclusion
\[
L_{f_{\phi\eta}}\subset F_\eta\cdot L_{f_\phi},
\]
which is a consequence of the Fourier inversion formula for $\eta$ and Birch's lemma (c.f. \cite[\S4]{haran}).
\end{proof}

Under our choice of periods, we define the following.

\begin{definition}
Let $\nu\in\{+,-\}$ and $\delta=0$ if $\nu=+$, whereas $\delta=1/2$ if $\nu=-$. Define
\[
L^\nu_{G,\phi}=\sum_{\eta\in\hat{A}}e_\eta\cdot \overline{\eta}(p)^{\delta} L^\nu_{f_{\phi\bar{\eta}}}\in \Lambda_{F_A}(\Gamma)[A],
\]
where $F_A=\cup_{\eta\in\hat{A}}F_\eta(\eta(p)^{1/2})$ and $e_\eta$ is the idempotent $|A|^{-1}\sum_{g\in A} \bar{\eta}(g)g$.
\end{definition}

 If we identify $\Gamma_K$ with $\Gamma$, we can treat $L^\pm_{G,\phi}$ as elements of $\Lambda_{F_A}(A\times\Gamma_K)\cong\Lambda_{F_A}(G)$. We shall do so from now on without further notice.

\begin{lemma}\label{lem:intercom}
Let $\chi$ be a one-dimensional character on $G$ with $\chi_0=\chi|_{A}$ and $\chi'=\chi|_{\Gamma_K}$ with conductor $p^n>1$, then
\[
L^\nu_{G,\phi}(\chi)=\frac{\chi_0(p)^{n/2}\tau(\chi')}{\omega^\nu(\chi')}\times \frac{L(\phi,\overline{\chi},1)}{\Omega_+}
\]
where $\nu=+$ if $n$ is even and $\nu=-$ if $n$ is odd.
\end{lemma}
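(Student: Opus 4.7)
The plan is to reduce the statement to Lemma~\ref{lem:inter2} applied to each summand, exploiting the orthogonality of the idempotents $e_\eta$. Given $\chi = \chi_0 \otimes \chi'$ with $\chi_0 \in \hat{A}$ and $\chi'$ a character on $\Gamma_K \cong \Gamma$, the standard orthogonality relation $\chi_0(e_\eta) = \delta_{\eta,\chi_0}$ will collapse the sum defining $L^\nu_{G,\phi}$ to a single term, yielding
\[
L^\nu_{G,\phi}(\chi) = \overline{\chi_0}(p)^\delta \cdot L^\nu_{f_{\phi\bar{\chi_0}}}(\chi').
\]
Here the key point is that $\phi\bar{\chi_0}$ is again a Grossencharacter of type $(-1,0)$, so the CM form $f_{\phi\bar{\chi_0}}$ has weight $2$, and its conductor remains coprime to $p$ (since $\chi_0$ factors through $A = \Gal(F/K)$ and $p$ is unramified in $F$). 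Thus Theorem~\ref{thm:pollack} and hence Lemma~\ref{lem:inter2} apply.

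Next I would apply Lemma~\ref{lem:inter2} to each of the two parity cases and use $L(f_{\phi\bar{\chi_0}}, \overline{\chi'}, 1) = L(\phi\bar{\chi_0}\overline{\chi'}, 1) = L(\phi, \overline{\chi}, 1)$, together with our normalisation $\Omega^+_{f_{\phi\bar{\chi_0}}} = \Omega_+$. This reduces the matter to showing that the product of the two prefactors $\overline{\chi_0}(p)^\delta$ and the power of the nebentypus $\epsilon(p)^{-\lceil n/2\rceil}$ coming from Lemma~\ref{lem:inter2} equals $\chi_0(p)^{n/2}$.

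The technical heart of the argument is therefore identifying the nebentypus of $f_{\phi\bar{\chi_0}}$ at $p$. Comparing the Euler factor of $L(\phi\bar{\chi_0}, s)$ at the inert prime $(p)$, which is $(1 - \phi\bar{\chi_0}((p))\, p^{-2s})^{-1}$, with the weight-$2$ Euler factor $(1 + \epsilon(p) p \cdot p^{-2s})^{-1}$ (noting $a_p = 0$), and using that $\phi((p)) = -p$ for the supersingular CM Grossencharacter, I obtain
\[
\epsilon(p) = -\phi\bar{\chi_0}((p))/p = \overline{\chi_0}(p).
\]
A short computation then shows that for $n$ even, $\overline{\chi_0}(p)^0 / \overline{\chi_0}(p)^{n/2} = \chi_0(p)^{n/2}$, while for $n$ odd, $\overline{\chi_0}(p)^{1/2}/\overline{\chi_0}(p)^{(n+1)/2} = \overline{\chi_0}(p)^{-n/2} = \chi_0(p)^{n/2}$, matching the claim in both cases.

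The main obstacle is the bookkeeping of the nebentypus identification, which is the only non-formal step; once $\epsilon(p) = \overline{\chi_0}(p)$ is established, the rest is a direct substitution. The curious factor $\overline{\eta}(p)^\delta$ introduced in the definition of $L^\nu_{G,\phi}$ is precisely what is needed to make the parity $n$ odd case line up, so verifying the odd case is also a useful sanity check that the definition has been calibrated correctly.
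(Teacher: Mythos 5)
Your proof is correct and follows essentially the same route as the paper's: collapse the sum via orthogonality of the idempotents $e_\eta$, apply Lemma~\ref{lem:inter2} to the weight-$2$ form $f_{\phi\overline{\chi_0}}$ with the chosen period $\Omega_+$, and check that $\overline{\chi_0}(p)^{\delta}\,\epsilon(p)^{-\lfloor (n+1)/2\rfloor}=\chi_0(p)^{n/2}$ in both parity cases. The only difference is that you actually justify the identification $\epsilon(p)=\overline{\chi_0}(p)$ by comparing Euler factors at the inert prime (using $\phi((p))=-p$), whereas the paper asserts it without proof; your verification is correct and is a worthwhile addition.
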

\begin{proof}
For any $\eta\in\hat{A}$, $\chi(e_\eta)=0$ if $\chi_0\ne\eta$ and $\chi(e_\eta)=1$ otherwise. This implies
\[
L^\nu_{G,\phi}(\chi)= \overline{\chi_0}(p)^{\delta}L^\nu_{f_{\phi\overline{\chi_0}}}(\chi').
\]
Since $f_{\phi\overline{\chi_0}}$ is of weight $2$ and its Nebentypus character takes value $\overline{\chi_0}(p)$ at $p$, Lemma~\ref{lem:inter2} implies that
\begin{eqnarray*}
 L^\nu_{f_{\phi\overline{\chi_0}}}(\chi')&=&\overline{\chi_0}(p)^{\delta}\times\frac{\tau(\chi')}{\overline{\chi_0}(p)^{\lfloor (n+1)/2\rfloor}\omega^\nu(\chi')}\times\frac{L(\phi\overline{\chi_0},\overline{\chi'},1)}{\Omega_\phi^+}\\
 &=&\frac{{\chi_0}(p)^{\lfloor (n+1)/2\rfloor-\delta}\tau(\chi')}{\omega^\nu(\chi')}\times\frac{L(\phi\overline{\chi_0},\overline{\chi'},1)}{\Omega_+}
\end{eqnarray*}
for the appropriate parity of $n$.
But $\lfloor(n+1)/2\rfloor -\delta =n/2$, so we are done.
\end{proof}

Let us recall from \cite[Corollary~5.11]{pollack03} and \cite[Lemma~6.5]{lei09} that $L^\pm_{f_{\phi\eta}}\ne0$ for all $\eta\in\hat{A}$. It therefore makes sense to talk about the $\mu$-invariants of $L^\pm_{f_{\phi\eta}}$. We from now on assume that the following is true.

\begin{assumption}
Under our choice of periods, the $\mu$-invariant of $L^\pm_{f_{\phi\eta}}$ is independent of $\eta\in \hat{A}$ (but may depend on the sign $\pm$). 
\end{assumption}

\begin{lemma}\label{lem:mgh}
Under our technical assumption, $L^\nu_{G,\phi}\in\Lambda_{\calO_{F_A}}(G)_{S^*}^\times= K_1(\Lambda_{\calO_{F_A}}(G)_{S^*})$.
\end{lemma}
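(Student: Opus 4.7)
The plan is to exploit the abelian product structure $G\cong A\times\Gamma_K$ together with the assumed uniformity of $\mu$-invariants to show that $L^\nu_{G,\phi}$ lies in $S^*(\calO_{F_A})$, from which invertibility in the localisation is immediate; the identification with $K_1$ in the commutative setting is then standard.

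After extending scalars to $F_A$ I have the direct product decomposition
\[
\Lambda_{F_A}(G)\;\cong\;\prod_{\eta\in\widehat{A}}\Lambda_{F_A}(\Gamma_K)
\]
coming from the orthogonal idempotents $e_\eta\in F_A[A]$, under which $L^\nu_{G,\phi}$ corresponds to the tuple $(\overline{\eta}(p)^{\delta}L^\nu_{f_{\phi\overline{\eta}}})_{\eta}$. Because $p$ is unramified in $F/K$, $\eta(p)$ is a root of unity whose square root lies in $F_A$ by construction, so $\overline{\eta}(p)^{\delta}$ is a unit in $\calO_{F_A}^\times$ in each component. Combined with the non-vanishing $L^\nu_{f_{\phi\overline{\eta}}}\ne 0$ recalled from \cite[Corollary~5.11]{pollack03} and \cite[Lemma~6.5]{lei09}, this shows every component of $L^\nu_{G,\phi}$ is a non-zero element of $\Lambda_{F_A}(\Gamma_K)$.

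Next I would invoke the technical assumption: all $L^\nu_{f_{\phi\overline{\eta}}}$ share a common $\mu$-invariant $\mu$, so each factors as $L^\nu_{f_{\phi\overline{\eta}}}=p^\mu\widetilde{L}_{\overline{\eta}}$ with $\widetilde{L}_{\overline{\eta}}\in\Lambda_{\calO_{F_A}}(\Gamma_K)$ non-zero modulo $p$. Consequently $p^{-\mu}L^\nu_{G,\phi}$ is integral in $\Lambda_{\calO_{F_A}}(G)$, and Weierstrass preparation in each $\eta$-component expresses $\overline{\eta}(p)^{\delta}\widetilde{L}_{\overline{\eta}}$ as a unit times a distinguished polynomial, so its quotient is finitely generated as an $\calO_{F_A}$-module. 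Reassembling over $\widehat{A}$ shows that $\Lambda_{\calO_{F_A}}(G)/(p^{-\mu}L^\nu_{G,\phi})$ is finitely generated over $\Lambda_{\calO_{F_A}}(A)$, hence $p^{-\mu}L^\nu_{G,\phi}\in S(\calO_{F_A})$ and therefore $L^\nu_{G,\phi}\in p^\mu S(\calO_{F_A})\subset S^*(\calO_{F_A})$. Elements of $S^*$ are by construction units in the localisation, which gives the desired conclusion.

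The main obstacle will be the integrality manoeuvre when $p\mid|A|$: the idempotents $e_\eta$ are no longer integral, the ring decomposition above only holds over $F_A$, and one must verify separately that $p^{-\mu}L^\nu_{G,\phi}$ really lies in $\Lambda_{\calO_{F_A}}(G)$. A standard remedy is to split $A=A_{p'}\times A_p$, carry out the idempotent decomposition only on the prime-to-$p$ factor $A_{p'}$, and handle the residual $\calO_{F_A}[A_p]$-structure directly via a Weierstrass-type argument. Once this descent is secured the rest of the proof is essentially formal.
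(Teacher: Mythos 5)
Your argument is essentially the paper's own proof, just written out in more detail: the common $\mu$-invariant lets you write $L^\nu_{G,\phi}$ as a power of $p$ times an element whose quotient in $\Lambda_{\calO_{F_A}}(G)$ is finitely generated over $\calO_{F_A}[A]$, hence the element lies in $S(\calO_{F_A})^*$ and is therefore a unit in the localisation. The integrality caveat you raise when $p\mid|A|$ is a fair observation, but it is also elided in the paper's proof and does not affect the strategy.
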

\begin{proof}
Since we assume that $L^\pm_{f_{\phi\eta}}$ have the same $\mu$-invariant, there exists an integer $n$ (depends on the sign $\pm$) such that
\[
\Lambda_{\calO_{F_A}}(G)/\Lambda_{\calO_{F_A}}(G)p^nL^\pm_{f_{\phi\eta}}
\]
is finitely generated over $\Zp$, and hence over $\calO_{F_A}[A]$. In particular,
\[
p^nL^\pm_{f_{\phi\eta}}\in S(\calO_{F_A})^*,
\]
which proves the lemma.
\end{proof}

Let $\iota$ be the inclusion map
\[
\iota:\Lambda_{\calO_{F_A}}(G)_{S^*})\rightarrow \Lambda_{\calO_{F_A}}(\calG)_{S^*}.
\]
This induces a map
\[
\iota_*:K_1(\Lambda_{\calO_{F_A}}(G)_{S^*})\rightarrow K_1(\Lambda_{\calO_{F_A}}(\calG)_{S^*}).
\]
Thanks to Lemma~\ref{lem:mgh}, we may now make the following definition.

\begin{definition}
Under the notation above, we define
\[
L_{\calG,\phi}^\pm:=(\iota)_*\left(L^\pm_{G,\phi}\right)\in K_1(\Lambda_{\calO_{F_A}}(\calG)_{S^*}).
\]
\end{definition}

The following lemma allows us to relate the interpolating properties of $L^\pm_{G,\phi}$ to those of $L_{\calG,\phi}^\pm$.

\begin{lemma}\label{lem:sameres}
For all Artin representations $\rho$ of $\calG$, one has
\[
L_{\calG,\phi}^\pm(\rho)=L^\pm_{G,\phi}\left(\Res^{\calG}_G\rho\right).
\]
\end{lemma}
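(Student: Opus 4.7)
The plan is to reduce the lemma to the functoriality of the evaluation map on $K_1$, so that the identity becomes tautological once the definitions are unpacked.

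First, I would recall that an Artin representation $\rho: \calG \to \GL_n(\overline{\Qp})$, factoring through a finite quotient of $\calG$, extends by continuity to a ring homomorphism $\rho_*: \Lambda_{\calO_{F_A}}(\calG) \to M_n(\calO')$ for a suitable finite extension $\calO'/\calO_{F_A}$. Localising at $S^*$ and composing the induced map on $K_1$ with the reduced norm produces the evaluation $x \mapsto x(\rho)$ recorded in the determinant map $\Det$ of \S\ref{sec:ore}; the analogous recipe defines the evaluation $L^\pm_{G,\phi}(\Res^\calG_G \rho)$ via the ring homomorphism $(\Res^\calG_G \rho)_*: \Lambda_{\calO_{F_A}}(G) \to M_n(\calO')$.

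Second, I would observe that by the very definition of restriction, $\Res^\calG_G \rho$ is the composition $G \hookrightarrow \calG \xrightarrow{\rho} \GL_n(\overline{\Qp})$, so at the level of (localised) Iwasawa algebras the triangle $(\Res^\calG_G \rho)_* = \rho_* \circ \iota$ commutes tautologically. Applying $K_1(-)$ and the reduced norm to both routes then yields
\[
L^\pm_{G,\phi}(\Res^\calG_G \rho) \;=\; \Det\bigl(\iota_*(L^\pm_{G,\phi})\bigr)(\rho) \;=\; L^\pm_{\calG,\phi}(\rho),
\]
as required.

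The only non-formal point requiring verification is that $\iota$ genuinely extends to a homomorphism of localised algebras $\Lambda_{\calO_{F_A}}(G)_{S^*} \to \Lambda_{\calO_{F_A}}(\calG)_{S^*}$, so that $\iota_*$ is defined on $K_1$ of the localisations. This holds because $\Lambda_{\calO_{F_A}}(\calG)$ is free of rank $[\calG:G]=2$ over $\Lambda_{\calO_{F_A}}(G)$: if $f$ lies in the canonical Ore set for $G$, meaning $\Lambda_{\calO_{F_A}}(G)/\Lambda_{\calO_{F_A}}(G)f$ is finitely generated over $\Lambda_{\calO_{F_A}}(A)$, then base change to $\calG$ preserves this finite generation, and hence a fortiori it is finitely generated over $\Lambda_{\calO_{F_A}}(\Delta \ltimes A)$, placing $f$ in $S(\calO_{F_A})$ for $\calG$ as well.

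The statement is therefore essentially tautological; the main (minor) obstacle is simply to keep track of the coefficient rings and to verify the compatibility of the Ore set localisations under $\iota$, after which the rest is functoriality of $K_1$ and the reduced norm.
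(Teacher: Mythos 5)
Your argument is correct and is essentially the paper's proof: the paper simply invokes the proof of \cite[Lemma~2.9]{bv10}, which is exactly this functoriality of the evaluation map under $\iota_*$, together with the observation that the Ore sets are compatible because $\Lambda_{\calO_{F_A}}(\calG)$ is free of rank $2$ over $\Lambda_{\calO_{F_A}}(G)$. The only cosmetic imprecision is that the evaluation $x\mapsto x(\rho)$ is defined via the map $g\mapsto\rho(g)\bar g$ into $M_n$ of the quotient field of $\Lambda_{\calO'}(\Gamma)$ (so that the value may be $\infty$), rather than into $M_n(\calO')$ directly, but this does not affect the commutativity of the triangle.
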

\begin{proof}
This follows from the same proof as \cite[Lemma~2.9]{bv10}.
\end{proof}

\begin{proposition}\label{prop:paLf}
Let $\rho$ be an irreducible Artin representation on $\calG$. Then
\[
L_{\calG,\phi}^\nu(\rho)=\frac{e_p(\rho)}{\omega^\nu(\rho)}\times\frac{L(E,\rho^\vee,1)}{\Omega_+^{d(\rho)}},
\]
where $\nu=+$ (respectively $\nu=-$) if the $\rho$ has even (respectively odd) $\Gamma$-conductor.
\end{proposition}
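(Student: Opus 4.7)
The plan is to carry over the ordinary-case argument of \cite[Proposition~2.10]{bv10} to our setting, with the plus/minus $p$-adic $L$-function $L^\pm_{G,\phi}$ playing the role of Yager's two-variable $p$-adic $L$-function. Lemma~\ref{lem:sameres} reduces the problem to evaluating $L^\nu_{G,\phi}(\Res^\calG_G\rho)$. By Clifford theory applied to $\calG=\Delta\ltimes G$ with $G$ abelian and $[\calG:G]=2$, the irreducible $\rho$ is either a one-dimensional character of $\calG$ whose restriction to $G$ is a single $c$-invariant character $\chi$, or else the induction $\Ind_G^\calG\chi$ of a character $\chi\neq\chi^c$, in which case $\Res^\calG_G\rho=\chi\oplus\chi^c$. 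Writing $\chi=\chi_0\otimes\chi'$ with $\chi_0=\chi|_A$ and $\chi'=\chi|_{\Gamma_K}$, the key observation is that $c$ acts trivially on $\Gamma_K\cong\Gamma$ (because $K_\infty/\QQ$ is the compositum of the linearly disjoint extensions $K/\QQ$ and $\QQ_\infty/\QQ$), so every character appearing in $\Res^\calG_G\rho$ has the same cyclotomic component $\chi'$. Consequently the $\Gamma$-conductor of $\rho$ equals that of $\chi'$, and by Lemma~\ref{lem:nonzero} we have $\omega^\nu(\rho)=\omega^\nu(\chi')^{d(\rho)}$.

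Next I would apply Lemma~\ref{lem:intercom} termwise to the decomposition of $\Res^\calG_G\rho$ into characters of $G$ and multiply the resulting values; note that the parity of the conductor of $\chi'$ matches the sign $\nu$ by hypothesis, so none of these factors vanish. The denominators consolidate to $\omega^\nu(\rho)\,\Omega_+^{d(\rho)}$, matching the right-hand side of the target formula. For the numerators, I would invoke the CM decomposition $V_p(E)=\Ind_K^\QQ V_\phi$ together with the projection formula and the invariance of $L$-functions under induction to obtain
\[
L(E,\rho^\vee,1)=L\bigl(V_p(E)\otimes\rho^\vee,1\bigr)=\prod_i L(\phi\,\bar\chi_i,1),
\]
where $\chi_i$ ranges over the characters appearing in $\Res^\calG_G\rho$. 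Since $R=\{p\}$ and the local Euler factors at $p$ are trivial for the ramified representations under consideration, this product equals $L_R(E,\rho^\vee,1)$.

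What remains is to match the arithmetic factor $\prod_i\chi_{i,0}(p)^{n/2}\tau(\chi')$ coming out of Lemma~\ref{lem:intercom} with the predicted local epsilon factor $e_p(\rho)$. For this I would unwind the inductive definition of epsilon factors from \S\ref{sec:no}: with $p$ inert in $K$ and $\pp$ the unique prime of $K$ above $p$, one has $e_p(\rho)=e_p(\Ind_K^\QQ\mathbf{1})\,e_\pp(\chi)/e_\pp(\mathbf{1})$ (with the one-dimensional case $\rho$ handled via the embedding of $\rho$ into $\Ind_K^\QQ(\rho|_{G_K})$). The factor $e_p(\Ind_K^\QQ\mathbf{1})$ is trivial because the quadratic character of $K/\QQ$ is unramified at the inert prime $p$, while $e_\pp(\chi)$ factors as an unramified contribution $\chi_{0,\pp}(p)^n$ times $\tau(\chi'_p\circ N_{K_\pp/\Qp})$. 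The Davenport--Hasse relation for the unramified quadratic extension $K_\pp/\Qp$ then converts the latter Gauss sum into $\tau(\chi')^2$, and the identity $\chi_0^c(p)=\chi_0(p)$, which follows from $\pp$ being $c$-fixed (so that $\mathrm{Frob}_\pp$ commutes with $c$ in $\calG$), rewrites the unramified contribution as $(\chi_0\chi_0^c)(p)^{n/2}$. The main technical obstacle is the careful sign bookkeeping in this local computation, especially in consistently treating both the one- and two-dimensional cases; once this is settled, combining with the first two steps yields the desired formula.
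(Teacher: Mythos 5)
Your proposal follows the same overall strategy as the paper's proof: reduce via Lemma~\ref{lem:sameres} to evaluating $L^\nu_{G,\phi}$ on $\Res^\calG_G\rho$, split into the one-dimensional case and the induced case $\rho=\Ind_G^\calG\chi$ with $\Res^\calG_G\rho=\chi\oplus\chi^c$, observe that $\chi$ and $\chi^c$ share the same cyclotomic component $\chi'$, apply Lemma~\ref{lem:intercom} factor by factor, and use Artin formalism to identify the product of $L(\phi\bar\chi_i,1)$ with $L(E,\rho^\vee,1)$. The one place you genuinely diverge is the identification of $\prod_i\chi_{i,0}(p)^{n/2}\tau(\chi')$ with $e_p(\rho)$: you go through the inductive definition $e_p(\rho)=e_p(\Ind_K^\QQ\mathbf{1})\,e_\pp(\chi)/e_\pp(\mathbf{1})$ and the Hasse--Davenport relation for the unramified quadratic extension $K_\pp/\Qp$, which is workable but is exactly where the sign bookkeeping you flag becomes delicate. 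The paper sidesteps this entirely: since $p$ is unramified in $F$, the decomposition group at $p$ in $\calG$ is abelian, so $\rho|_{G_{\Qp}}$ already splits as a direct sum of two characters $\rho_1\oplus\rho_2$ restricting to $\chi,\chi^c$ on $G_{K_p}$; one then applies the one-dimensional definition $e_p(\rho_i)=\rho_{i,0}(p)^n\tau(\chi')$ together with multiplicativity and the relation $\rho_{i,0}(p)^2=\chi_0(p)$ (as $K_p/\Qp$ is unramified of degree $2$), obtaining $e_p(\rho)=(\chi_0\chi_0^c)(p)^{n/2}\tau(\chi')^2$ with no lifting relation needed. (The same device handles the one-dimensional case, where the paper writes $e_p(\rho)=\rho_0(p)^n\tau(\chi')=\chi_0(p)^{n/2}\tau(\chi')$ directly rather than embedding $\rho$ into an induced representation.) Your further simplification $\chi_0^c(p)=\chi_0(p)$ is correct since $\pp$ is $c$-stable, but the paper does not need it and leaves the symmetric expression $(\chi_0\chi_0^c)(p)^{n/2}$.
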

\begin{proof}
By \cite[\S8.2]{serre78} and \eqref{eq:gsemi}, all irreducible Artin representations $\rho$ of $\calG$ are either one-dimensional or isomorphic $\Ind_G^{\calG}\chi$ for some one-dimensional character $\chi$ on $G$ with $\chi\ne\chi^c$ (the latter case occurs if and only if $\calG$ is non-abelian).

If $\rho$ is one-dimensional, write $\chi=\Res_G^\calG(\rho)$ and we factor $\chi$ into $\chi_0\otimes\chi'$ as in Lemma~\ref{lem:intercom}. We have $\chi'=\rho|_{\Gamma}$. Let $p^n$ be the conductor of $\chi'$. Via the identification of $\Gamma$ with $\Gamma_K$, we see that $\rho$ has even (respectively odd) $\Gamma$-conductor if and only if $n$ is even (respectively odd). Therefore, we can combine Lemmas~\ref{lem:intercom} and~\ref{lem:sameres} to deduce that
\begin{equation}\label{eq:interes}
L_{\calG,\phi}^\nu(\rho)=\frac{\chi_0(p)^{n/2}\tau(\chi')}{\omega^\nu(\chi')}\times \frac{L(\phi,\overline{\chi},1)}{\Omega_+}.
\end{equation}

By Frobenius reciprocity, we have
\begin{equation}\label{eq:frob}
L(\phi,\overline{\chi},1)=L(E,\overline{\rho},1)=L(E,\rho^\vee,1).
\end{equation}
Let $\rho_0$ be the unramified part of $\rho|_{G_{\Qp}}$. Then, $e_p(\rho)=\rho_0(p)^n\tau(\chi')$ by definition. But $K_p/\Qp$ is an unramified extension of degree $2$, so $\rho_0(p)^2=\chi_0(p)$. Hence, $e_p(\rho)=\chi_0(p)^{n/2}\tau(\chi')$. Together with \eqref{eq:interes} and \eqref{eq:frob}, we can therefore conclude that
\[
L_{\calG,\phi}^\nu(\rho)=\frac{e_p(\rho)}{\omega^\nu(\rho)}\times\frac{L(E,\rho^\vee,1)}{\Omega_+}
\] 
as required.

We now study the case when $\rho$ is $2$-dimensional, should it occur. In this case, we have $\rho=\Ind_G^{\calG}\chi$ for some character $\chi$ on $G$. Then $\Res^{\calG}_G\rho=\chi\oplus\chi^c$. As before, we decompose $\chi$ and $\chi^c$ into $\chi_0\otimes\chi'$ and $\chi_0^c\otimes(\chi^c)'$ respectively. As we assume that $\rho$ has even or odd $\Gamma$-conductor, we have $\chi'=(\chi^c)'\ne\mathbf{1}$. In particular,
\begin{equation}\label{eq:decom}
\rho|_{\Gamma}=\chi'\oplus(\chi^c)'=(\chi')^{\oplus2}.
\end{equation}

 We can once again apply Lemmas~\ref{lem:intercom} and~\ref{lem:sameres} to deduce that
\begin{eqnarray*}
L_{\calG,\phi}^\nu(\rho)&=&L^\nu_{G,\phi}(\chi)L^\nu_{G,\phi}(\chi^c)\\
&=&(\chi_0\chi_0^c(p))^{n/2}\times\left(\frac{\tau(\chi')}{\omega^\nu(\chi')}\right)^2\times \frac{L(\phi,\overline{\chi},1)L(\phi,\overline{\chi^c},1)}{\Omega_+^2},
\end{eqnarray*}
where $\nu$ depends on the parity of the $\Gamma$-conductor of $\rho$ as in the statement of the proposition.

As in the one-dimensional case, Frobenius reciprocity implies that
\[
L(\phi,\overline{\chi},1)L(\phi,\overline{\chi^c},1)=L(\phi,\overline{\chi}\oplus\overline{\chi^c},1)=L(E,\rho^\vee,1).
\]
By our assumption on the non-ramification of $p$ in $F$, $\rho|_{G_{\Qp}}$ in fact decomposes into the direct sum of two one-dimensional characters, say $\rho_1$ and $\rho_2$. On restricting to $G_{K_p}$, they coincide with $\chi$ and $\chi^c$. Hence, as above, we may deduce that $e_p(\rho_1)$ and $e_p(\rho_2)$ are given by $\chi_0^{n/2}(p)\tau(\chi')$ and $\chi_0^c(p)^{n/2}\tau(\chi')$. This implies that
\[
e_p(\rho)=e_p(\rho_1)e_p(\rho_2)=\chi_0\chi_0^c(p)^{n/2}(\tau(\chi'))^2.
\]
Putting all these together, we conclude that
\begin{eqnarray*}
L_{\calG,\phi}^\nu(\rho)&=&\frac{e_p(\rho)}{(\omega^\nu(\chi'))^2}\times\frac{L(E,\rho^\vee,1)}{\Omega_+^2}\\
&=&\frac{e_p(\rho)}{\omega^\nu(\rho)}\times\frac{L(E,\rho^\vee,1)}{\Omega_+^2},
\end{eqnarray*}
where the second equality follows from \eqref{eq:decom}.
\end{proof}

We can now define our plus and minus $p$-adic $L$-functions of $E$ over $\calG$.
\begin{definition}
Let $\calL_\Omega=\frac{1+c}{2}+\frac{\Omega_+}{\Omega_-}\times \frac{1-c}{2}\in\Lambda(\calG)^\times$ (c.f. \cite[Lemma~2.10]{bv10}). We define
\[
\calL_{\calG,E}^\pm:=L_{\calG,\phi}^\pm\calL_{\Omega}\in K_1(\Lambda_{\calO_{F_A}}(\calG)_{S^*}).
\]
\end{definition}

\begin{theorem}\label{thm:exist}
Let $\rho$ be an irreducible Artin representation of $\calG$. Then
\[
\calL_{\calG,E}^\nu(\rho)=\frac{e_p(\rho)}{\omega^\nu(\rho)}\times\frac{L(E,\rho^\vee,1)}{\Omega_+^{d^+(\rho)}\Omega_-^{d^-(\rho)}},
\]
where $\nu=+$ (respectively $\nu=-$) if the $\rho$ has even (respectively odd) $\Gamma$-conductor.
\end{theorem}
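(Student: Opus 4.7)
The plan is to reduce immediately to Proposition~\ref{prop:paLf} and then to compute $\calL_\Omega(\rho)$ as a determinant on the representation space $V$ of $\rho$. Since the evaluation map $\xi\mapsto\xi(\rho)$ is multiplicative on $K_1$, the definition $\calL_{\calG,E}^\pm=L_{\calG,\phi}^\pm\cdot\calL_\Omega$ gives
\[
\calL_{\calG,E}^\nu(\rho)=L_{\calG,\phi}^\nu(\rho)\cdot\calL_\Omega(\rho),
\]
and Proposition~\ref{prop:paLf} already supplies the first factor as $\frac{e_p(\rho)}{\omega^\nu(\rho)}\cdot\frac{L(E,\rho^\vee,1)}{\Omega_+^{d(\rho)}}$ with $\nu$ determined by the parity of the $\Gamma$-conductor of $\rho$. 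Hence it suffices to show
\[
\calL_\Omega(\rho)=\left(\frac{\Omega_+}{\Omega_-}\right)^{d^-(\rho)},
\]
since then $\Omega_+^{d(\rho)}/\calL_\Omega(\rho)=\Omega_+^{d^+(\rho)}\Omega_-^{d^-(\rho)}$, using $d(\rho)=d^+(\rho)+d^-(\rho)$.

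To compute $\calL_\Omega(\rho)$, I would unwind the determinant description: $\calL_\Omega(\rho)=\det\bigl(\rho(\calL_\Omega)\mid V\bigr)$, where
\[
\rho(\calL_\Omega)=\frac{1+\rho(c)}{2}+\frac{\Omega_+}{\Omega_-}\cdot\frac{1-\rho(c)}{2}\in\mathrm{End}(V).
\]
Since $c^2=1$, the operator $\rho(c)$ is an involution and $V$ decomposes into eigenspaces $V=V^+\oplus V^-$ of dimensions $d^+(\rho)$ and $d^-(\rho)$ respectively. The idempotents $\tfrac{1\pm\rho(c)}{2}$ project onto $V^\pm$, so $\rho(\calL_\Omega)$ acts as $1$ on $V^+$ and as $\Omega_+/\Omega_-$ on $V^-$. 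Taking the determinant gives exactly $(\Omega_+/\Omega_-)^{d^-(\rho)}$, as required.

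Multiplying the two factors together then yields
\[
\calL_{\calG,E}^\nu(\rho)=\frac{e_p(\rho)}{\omega^\nu(\rho)}\times\frac{L(E,\rho^\vee,1)}{\Omega_+^{d^+(\rho)}\Omega_-^{d^-(\rho)}},
\]
which completes the proof. There is no real obstacle here: Proposition~\ref{prop:paLf} has already absorbed all the work of tracking the interpolation formula through the restriction map $\iota_*$ and the distinction between the abelian and non-abelian (two-dimensional induced) cases. The only new input is the eigenspace computation for $\rho(c)$, which is a formal consequence of $c$ being an involution. The mildly delicate point to double-check is simply that $\calL_\Omega\in\Lambda(\calG)^\times$ so that it really does lie in $K_1(\Lambda_{\calO_{F_A}}(\calG)_{S^*})$ and the product $L_{\calG,\phi}^\pm\cdot\calL_\Omega$ makes sense there, but this is exactly the content of the cited \cite[Lemma~2.10]{bv10}.
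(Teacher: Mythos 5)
Your proof is correct and follows essentially the same route as the paper: both reduce to Proposition~\ref{prop:paLf} and then evaluate the correction factor $\calL_\Omega$ at $\rho$. The only (cosmetic) difference is that you compute $\calL_\Omega(\rho)=(\Omega_+/\Omega_-)^{d^-(\rho)}$ uniformly via the eigenspace decomposition of $\rho(c)$, whereas the paper treats the one- and two-dimensional cases separately using the explicit values of $d^\pm(\rho)$ in each case; your version is marginally cleaner since it does not rely on knowing those values.
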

\begin{proof}
If $\rho$ is $1$-dimensional, then $d^+(\rho)=1$ and $\calL_\Omega(\rho)=1$. Hence, by Proposition~\ref{prop:paLf},
\begin{eqnarray*}
\calL_{\calG,E}^\nu(\rho)&=&L_{\calG,\phi}^\nu(\rho)\\
&=&\frac{e_p(\rho)}{\omega^\nu(\rho)}\times\frac{L(E,\rho^\vee,1)}{\Omega_+^{d(\rho)}},
\end{eqnarray*}
where $\nu$ corresponds to the parity of the $\Gamma$-conductor of $\rho$ as usual. If $\rho$ is $2$-dimensional, then $d^+(\rho)=d^-(\rho)=1$ and $\calL_\Omega(\rho)=\Omega_+/\Omega_-$. Therefore, we deduce from Proposition~\ref{prop:paLf} that
\begin{eqnarray*}
\calL_{\calG,E}^\nu(\rho)&=&L_{\calG,\phi}^\nu(\rho)\\
&=&\frac{e_p(\rho)}{\omega^\nu(\rho)}\times\frac{L(E,\rho^\vee,1)}{\Omega_+^{2}}\times\frac{\Omega_+}{\Omega_-}\\
&=&\frac{e_p(\rho)}{\omega^\nu(\rho)}\times\frac{L(E,\rho^\vee,1)}{\Omega_+\Omega_-}.
\end{eqnarray*}
\end{proof}

\begin{remark}
As remarked above, $R=\{p\}$ because $E$ has complex multiplication. Since the Euler factor of $L(E,\rho^\vee,1)$ at $p$ is trivial, Theorem~\ref{thm:exist}, together with Remark~\ref{rk:sameirred}, give an affirmative answer to Conjecture~\ref{conj:exist} in this particular setting.
\end{remark}


\subsection{Remarks on the main conjecture}
Since $p$ is unramified in $F$, $p$ splits into distinct primes $\pp_1\cdots\pp_d$ in $F$. Results in \cite[\S4]{leizerbes11}  imply that the signed Selmer groups $\Sel_p^\pm(E/F_{\infty})$ can be explicitly described as follows. For $n\ge0$, let
\[
\Sel_p^\pm(E/F_n)=\ker\left(\Sel_p(E/F_n)\rightarrow\bigoplus_{i=1}^d\frac{H^1(F_{\pp_i,n},E[p^\infty])}{E(F_{\pp_i,n})^\pm\otimes\Qp/\Zp}\right),
\]
where
\[
E(F_{\pp_i,n})^\pm=\left\{x\in E(F_{\pp_i,n}):\Tr_{n/m+1}(x)\in E(F_{\pp_i,m}) \forall m\in S_n^\pm\right\}.
\]
Here, $\Tr_{n/m+1}$ denotes the trace map $E(F_{\pp_i,n})\rightarrow E(F_{\pp_i,m+1})$ with respect to the group law on $E$ and
\begin{eqnarray*}
S_n^+&=&[0,n-1]\cap2\ZZ;\\
S_n^-&=&[0,n-1]\cap(2\ZZ+1).
\end{eqnarray*}
Then we have
\[
\Sel_p^\pm(E/F_{\infty})=\varinjlim\Sel^\pm_p(E/F_n).
\]
Note that this is the same definition as given in \cite{kimparkzhang}, where $p$ is assumed to split completely in $F$.

The main conjecture (Conjecture~\ref{conj:main}) relates our $p$-adic $L$-functions $\calL^\pm_{\calG,E}$ to the dual Selmer groups $X^\pm(E/F_\infty)$. In \cite[\S2]{bv10}, it has been shown that the main conjecture for the extension $\QQ(E[p^\infty])/\QQ$ follows from the $\MGH$-conjecture and the $2$-variable main conjecture in the ordinary case. Unfortunately, we do not have the corresponding $2$-variable main conjecture in our current setting
, so we cannot adopt the method of Bouganis and Venjakob directly here.




\bibliographystyle{amsalpha}
\bibliography{references}
\end{document}